\documentclass[11pt]{amsart}

\usepackage{amsmath, amssymb, amscd, amsthm}
\usepackage{latexsym}
\usepackage[mathscr]{eucal}
\usepackage{mathrsfs}
\usepackage{graphicx}
\usepackage{verbatim}
\usepackage{version}
\usepackage{nicefrac}
\usepackage{pstricks}
\usepackage{pst-all}
\usepackage{color}
\usepackage[all]{xy}
\usepackage{mathdots}
\usepackage{longtable}
\usepackage{geometry}
\theoremstyle{plain}
\newtheorem{Th}{Theorem}[section]
\newtheorem{Lemma}[Th]{Lemma}
\newtheorem{Cor}[Th]{Corollary}
\newtheorem{Prop}[Th]{Proposition}

 \theoremstyle{definition}

\newtheorem{Rem}[Th]{Remark}
\newtheorem{?}[Th]{Problem}

 \numberwithin{equation}{section}
\begin{document}

\email{Jiseongk@buffalo.edu}
\address{University at Buffalo, Department of Mathematics
244 Mathematics Building
Buffalo, NY 14260-2900}
\title{ON Hecke eigenvalues of cusp forms 
 IN ALMOST ALL SHORT INTERVALS 
}
\author{jiseong kim}
\begin{abstract} 

 Let $\psi$ be a function such that $\psi(x) \rightarrow \infty$ as $x \rightarrow \infty.$ Let $\lambda_{f}(n)$ be the $n$-th Hecke eigenvalue of a fixed holomorphic cusp form $f$ for $SL(2,\mathbb{Z}).$
We show that for any real valued function $h(x)$ such that $(\log X)^{2-2\alpha} \ll h(X) =o(X),$  
$$\sum_{n=x}^{x+h(X)} |\lambda_{f}(n)| \ll_{f} h(X)\psi(X)(\log X)^{\alpha-1}$$  for all but $O_{f}( X\psi(X)^{-2})$ many integers 
  $x\in [X,2X-h(X)],$
in which $\alpha$ is the average value of $|\lambda_{f}(p)|$ over primes. We generalize this for $|\lambda_{f}(n)|^{2^{k}}$ for $k \in \mathbb{Z^{+}}.$
\end{abstract}

\maketitle
\section{Introduction}\

Let $f(z)$ be a holomorphic Hecke cusp form of even integral weight $k
$ for the full modular group $SL(2,\mathbb{Z}).$ Let $e(z)=e^{2\pi iz}$. It is well known that $f(z)$ has a Fourier expansion
\begin{equation}
    f(z)=\sum_{n=1}^{\infty} c_{n}n^{\frac{k-1}{2}}e(nz)
\end{equation}
for some real numbers  $c_{n}.$ For each $n \in \mathbb{N},$
$$T_{n}f(z):=\frac{1}{n} \sum_{ad=n} a^{k} \sum_{0\leq b <d} f(\frac{az+b}{d})=\lambda_{f}(n) f(z),$$ in which $T_{n}$ is the $n$-th Hecke operator, $\lambda_{f}(n)$ is the $n$-th Hecke eigenvalue. The Hecke eigenvalues $\{\lambda_{f}(n)\}_{n \in \mathbb{N}}$ satisfy the following properties.

\begin{equation}
c_{1}\lambda_{f}(n)=c_{n},
\end{equation}
\begin{equation}
    \lambda_{f}(m) \lambda_{f}(n)= \sum_{d|(n,m)} \lambda_{f}(\frac{nm}{d^{2}}),
\end{equation}

\begin{equation}
     |\lambda_{f}(n)| \leq d(n),
\end{equation}
  in which $d(n):= \sum_{m|n} 1$ (the inequality (4) is called the Deligne bound).
For details, see Chapter 14, \cite{IK1}.

We say that $\alpha$ is the average value of $|\lambda_{f}(p)|$ when 
 \begin{equation}
\sum_{p < x} \frac{|\lambda_{f}(p)|}{p} = \sum_{p<x} \frac{\alpha}{p}+O_{f}(1)
\end{equation}
for big enough $X.$
Sato-Tate conjecture implies that $\alpha=\frac{8}{3\pi}$ $(=0.848826...).$ In \cite{EMSS} P. D. T. A Elliott, C. J. Moreno and F. Shahidi proved that $\alpha \leq \frac{17}{18}$ without assuming Sato-Tate conjecture.

When $h=X^{\delta}$  for some  $\delta \in (0,1]$, by Shiu's theorem (see Lemma 2.2),
    $$ \sum_{n=X}^{X+h}|\lambda_{f}(n)| \ll_{f,\delta} h\prod_{p=1}^{X} (1+\frac{\alpha-1}{p})\ll h(\log X)^{\alpha-1}$$ for big enough $X,$
but when $h(X)=o_{\delta}(X^{\delta})$ for any $\delta>0,$ we can not use Shiu's theorem because the interval is too short. In Section 2, we prove some lemmas by using some arguments of the papers \cite{MR3}, \cite{MRT2} to overcome this obstacle.

Although the results in this paper are stated for holomorphic cusp forms, the same arguments in this paper apply equally well to Maass cusp forms on $SL(2,\mathbb{Z}),$ if we assume (1.4)  (The Ramanujan-Petersson conjecture.).

 We give some notations that will be used throughout in this paper. We use $\varphi$ to denote the Euler totient function.
We use $\psi$ to denote a function from $\mathbb{R}$  to $\mathbb{R}$ such that $\psi(x)\rightarrow \infty$ as $x \rightarrow \infty.$
For any two functions $k(x)$ and $l(x)$, we use $k(x)\ll l(x)$ (and $k(x)=O(l(x))$) to denote that there exists a constant $C$ such that  $|k(x)| \leq C l(x)$ for all $x.$ 
We use $k(x)=o(l(x))$ to denote $|\frac{k(x)}{l(x)}| \rightarrow 0$ as $x \rightarrow \infty$ and $n \sim X$ to denote $n \in [X,2X].$ Summing over the index $p$ denotes summing over primes. For the convenience, we denote $h:=h(X).$

\subsection{Main results}

\begin{Th}
Let $X>0$ be big enough,
let $q$ be a natural number smaller than $X.$ Let $h$ be a real valued function such that 
 $\varphi(q)(\log X)^{2-2\alpha} \ll_{f} h = o(X).$ 
  Then there exists a Dirichlet character $\chi$ modulo $q$  such that \begin{equation} \sum_{n=x}^{x+h} |\lambda_{f}(n)|\chi(n) \ll_{f} h\psi(X) \varphi(q)^{-0.5}(\log X)^{\alpha-1} \end{equation}  for all but $O_{f}(X\psi(X)^{-2})$ many integers $x\in [X,2X-h].$
\end{Th}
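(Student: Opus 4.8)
The plan is to derive the statement from a variance (second moment) estimate via Chebyshev's inequality, producing the good character $\chi$ by an averaging argument over the characters modulo $q$. For a fixed character $\chi \bmod q$ put $S_{\chi}(x):=\sum_{n=x}^{x+h}|\lambda_{f}(n)|\chi(n)$. I claim it suffices to exhibit one $\chi \bmod q$ with
\[
\sum_{X\le x\le 2X-h}|S_{\chi}(x)|^{2}\ll_{f} X h^{2}\varphi(q)^{-1}(\log X)^{2\alpha-2}.
\]
Indeed, Chebyshev's inequality applied to this $\chi$ with threshold $h\psi(X)\varphi(q)^{-1/2}(\log X)^{\alpha-1}$ bounds the number of $x\in[X,2X-h]$ for which (1.6) fails by
\[
\frac{\sum_{x}|S_{\chi}(x)|^{2}}{h^{2}\psi(X)^{2}\varphi(q)^{-1}(\log X)^{2\alpha-2}}\ll_{f}\frac{X}{\psi(X)^{2}},
\]
which is exactly the asserted exceptional bound; in particular the factor $\varphi(q)^{-1/2}$ in the statement is precisely the square root of the $\varphi(q)^{-1}$ saving in the variance.

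To locate such a $\chi$ I would bound the total variance over all characters and then pass to the minimising character. By the orthogonality relations for Dirichlet characters,
\[
\sum_{\chi \bmod q}\sum_{X\le x\le 2X-h}|S_{\chi}(x)|^{2}=\varphi(q)\sum_{X\le x\le 2X-h}\ \sum_{\substack{x\le n_{1},n_{2}\le x+h\\ q\mid n_{1}-n_{2},\ (n_{1}n_{2},q)=1}}|\lambda_{f}(n_{1})||\lambda_{f}(n_{2})|.
\]
Carrying out the summation over $x$ first replaces the count of admissible $x$ by a window of length $\ll h$, so the right-hand side is
\[
\ll \varphi(q)\,h\sum_{\substack{n_{1},n_{2}\sim X,\ |n_{1}-n_{2}|\le h\\ q\mid n_{1}-n_{2},\ (n_{1}n_{2},q)=1}}|\lambda_{f}(n_{1})||\lambda_{f}(n_{2})|.
\]
If this is shown to be $\ll_{f} X h^{2}(\log X)^{2\alpha-2}$, then the character minimising the variance satisfies the displayed bound, and the first paragraph finishes the proof.

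It remains to split the last double sum into its diagonal $n_{1}=n_{2}$ and off-diagonal $n_{1}\ne n_{2}$ parts. The diagonal contributes $\ll\varphi(q)h\sum_{n\sim X}|\lambda_{f}(n)|^{2}\ll_{f}\varphi(q)hX$ by the Rankin--Selberg estimate $\sum_{n\sim X}|\lambda_{f}(n)|^{2}\ll_{f} X$, and this is $\ll Xh^{2}(\log X)^{2\alpha-2}$ exactly when $\varphi(q)(\log X)^{2-2\alpha}\ll_{f} h$. This is the origin of the lower bound on $h$ in the hypothesis, and it is the binding constraint.

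The main obstacle is the off-diagonal part, which after writing $n_{1}=n_{2}+qm$ becomes the shifted correlation sums $\sum_{n\sim X}|\lambda_{f}(n)||\lambda_{f}(n+qm)|$ with $1\le|m|\le h/q$. A direct Cauchy--Schwarz bounds each by $\ll_{f}X$, losing a factor $(\log X)^{2-2\alpha}$ and hence failing; one must instead show that these full-range correlations do not exceed their expected order $\ll_{f} X(\log X)^{2\alpha-2}$. This is precisely what the short-interval variance method converts the problem into: although the intervals of length $h$ are too short for Shiu's theorem, the second moment trades them for correlation sums over the long range $n\sim X$, to which the Section~2 lemmas (adapted from the arguments of \cite{MR3} and \cite{MRT2}, together with the mean value $\sum_{n\sim X}|\lambda_{f}(n)|\ll_{f} X(\log X)^{\alpha-1}$ implied by (1.5)) do apply. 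Granting the correlation bound, the off-diagonal is
\[
\ll \varphi(q)\,h\cdot\frac{h}{q}\cdot X(\log X)^{2\alpha-2}=\frac{\varphi(q)}{q}\,Xh^{2}(\log X)^{2\alpha-2}\le Xh^{2}(\log X)^{2\alpha-2},
\]
since $\varphi(q)\le q$, which together with the diagonal completes the variance bound and the proof.
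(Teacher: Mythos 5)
Your proposal is correct, but it takes a genuinely different route from the paper's. The paper runs the Matom\"aki--Radziwi{\l}{\l}-style machinery: its Lemma 2.1 (Perron's formula plus smoothing) converts the short-interval variance into mean values of the Dirichlet polynomials $\sum_{n\sim X}|\lambda_f(n)|\chi(n)n^{-1-it}$, its Lemma 2.3 bounds those mean values averaged over characters by a diagonal term plus shifted-correlation sums, Proposition 3.1 passes to the minimising character, and Chebyshev finishes in Section 4. You bypass the Dirichlet-polynomial layer entirely: orthogonality of characters applied to the discrete variance $\sum_x|S_\chi(x)|^2$ produces directly the same two objects --- a diagonal term handled by Rankin--Selberg, which is exactly where the hypothesis $\varphi(q)(\log X)^{2-2\alpha}\ll_f h$ enters (mirroring the paper's (3.2)), and off-diagonal terms supported on $q\mid n_1-n_2$ handled by Shiu's theorem (the paper's Lemma 2.2, (2.6)) --- followed by the same pigeonhole-over-characters and Chebyshev endgame. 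So the arithmetic inputs coincide; what differs is the harmonic-analysis reduction, and yours is shorter and more elementary. What the paper's framework buys in principle is the option of factoring the Dirichlet polynomial for extra savings (Ramar\'e/Buchstab-type identities), but the paper explicitly notes it cannot exploit this here, so for this theorem the two routes are of equal strength. One imprecision to flag: the correlation bound you ``grant'' is not quite what Shiu's theorem gives; the averaged bound carries the Euler factors $\prod_{p\le X,\, p\nmid q}\bigl(1+\frac{|\lambda_f(p)|-1}{p}\bigr)^2\prod_{p\mid q}\bigl(1-\frac{1}{p}\bigr)$, which can exceed $(\log X)^{2\alpha-2}$ by a factor of order $q/\varphi(q)$. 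This is exactly absorbed by the slack $\varphi(q)/q\le 1$ you retain in your final display (the paper's computation (2.9) is precisely this bookkeeping), so your bound $Xh^2(\log X)^{2\alpha-2}$, and hence the theorem, still follows.
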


When $q=1,$ $\chi$ in Theorem 1.1 should be the trivial character. Therefore, we obtain the following corollary. 

\begin{Cor}  Let $X>0$ be big enough. 
 Let $h$ be a real valued function such that 
 $(\log X)^{2-2\alpha} \ll_{f} h = o(X).$ Then   \begin{equation}\sum_{n=x}^{x+h} |\lambda_{f}(n)| \ll_{f} h\psi(X) (\log X)^{\alpha-1}\end{equation}  for all but at most $O_{f}(X\psi(X)^{-2})$ integers  $x\in [X,2X-h].$ 
\end{Cor}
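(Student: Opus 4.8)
The plan is to obtain Corollary 1.2 as the $q=1$ specialization of Theorem 1.1, and then to describe how I would prove the underlying theorem itself, since that is where all the content sits. When $q=1$ the only Dirichlet character modulo $q$ is the trivial one $\chi\equiv 1$, so $\varphi(q)=1$, the factor $\varphi(q)^{-0.5}$ is $1$, and the hypothesis $\varphi(q)(\log X)^{2-2\alpha}\ll_f h=o(X)$ collapses to exactly the hypothesis $(\log X)^{2-2\alpha}\ll_f h=o(X)$ of the corollary. The conclusion (1.6) of Theorem 1.1 then reads precisely as (1.7). So the corollary is immediate once the theorem is in hand, and I will sketch the theorem.

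The first move is to replace the pointwise assertion by a second-moment estimate and close with Chebyshev's inequality. Writing $S_\chi(x):=\sum_{x\le n\le x+h}|\lambda_f(n)|\chi(n)$ and $M:=(\log X)^{\alpha-1}$, the target exceptional set of size $O_f(X\psi(X)^{-2})$ is exactly what follows from a bound of the shape
$$\sum_{X\le x\le 2X-h}|S_\chi(x)|^2\ll_f X\,h^2\,\varphi(q)^{-1}M^2$$
for a suitable $\chi$: the number of $x$ with $|S_\chi(x)|>h\psi(X)\varphi(q)^{-0.5}M$ is then at most $\bigl(h\psi(X)\varphi(q)^{-0.5}M\bigr)^{-2}\sum_x|S_\chi(x)|^2\ll_f X\psi(X)^{-2}$. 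To produce the character, I would average the left-hand side over all $\chi\bmod q$; by orthogonality the averaged second moment is governed by pairs $m\equiv n\bmod q$, which supplies the $\varphi(q)^{-1}$ density saving, and pigeonhole then selects a single $\chi$ obeying the displayed inequality. For the corollary this averaging is vacuous.

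The heart of the matter is the second-moment bound, where I would import the Matomäki--Radziwiłł circle of ideas behind the Section 2 lemmas (following \cite{MR3}, \cite{MRT2}). The function $|\lambda_f(n)|$ is non-negative and multiplicative with $|\lambda_f(n)|\le d(n)$, and its average over primes is $\alpha$, so a Selberg--Delange or Wirsing type estimate gives $\frac1X\sum_{n\le X}|\lambda_f(n)|\asymp M$; this is where $(\log X)^{\alpha-1}$ enters. Since $\sum_x S_\chi(x)\asymp XhM$ for the principal character, Cauchy--Schwarz forces $\sum_x|S_\chi(x)|^2\gg Xh^2M^2$, so $Xh^2M^2$ is the natural size to aim for. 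I would then pass through a Parseval-type identity relating $\sum_x|S_\chi(x)|^2$ to a mean value of the Dirichlet polynomial $\sum_{n\sim X}|\lambda_f(n)|\chi(n)n^{-it}$ against a kernel concentrated on frequencies $|t|\lesssim X/h$, and control that mean value through the factorization of $|\lambda_f|$ together with the Rankin--Selberg bound $\sum_{n\le X}|\lambda_f(n)|^2\ll X$.

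The main obstacle is precisely the regime where Shiu's theorem fails: $h$ may be as small as a fixed power of $\log X$, so the interval is far too short for any elementary count, and everything rests on squeezing the Dirichlet-polynomial mean value down to size $h^2M^2/X$ uniformly across the whole frequency range. I expect the threshold $h\gg(\log X)^{2-2\alpha}$ to emerge from balancing the small- and large-value contributions of $|\sum_{n\sim X}|\lambda_f(n)|\chi(n)n^{-it}|$ in this $L^2$ analysis; note that $(\log X)^{2-2\alpha}=M^{-2}$, so the condition is the transparent requirement $hM^2\gg1$. The delicate step is bounding the large values of the Dirichlet polynomial rather than its diagonal main term, which is routine, and this is presumably what the Section 2 lemmas are engineered to supply.
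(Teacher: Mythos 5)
Your deduction of the corollary from Theorem 1.1 is exactly the paper's own proof: when $q=1$ the only character is the trivial one, $\varphi(1)=1$, and (1.6) becomes (1.7). Your outline of the theorem itself also matches the paper's architecture at the structural level: a second-moment bound over $x\in[X,2X]$ closed by Chebyshev (Section 4, with $B(X)=\psi(X)^{2}\varphi(q)^{-1}(\log X)^{2\alpha-2}$), a Parseval-type reduction to the mean square of $F(1+it)=\sum_{n\sim X}|\lambda_f(n)|\chi(n)n^{-1-it}$ on frequencies $|t|\le X/h$ plus a tail term (Lemma 2.1), and selection of a single $\chi$ by averaging over characters and pigeonholing (inequality (3.1)).

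However, at the step you yourself call the heart of the matter, your sketch goes down a road the paper explicitly rules out. You propose to control the Dirichlet-polynomial mean value ``through the factorization of $|\lambda_f|$'' and by ``bounding the large values of the Dirichlet polynomial,'' i.e.\ the Matom\"a{}ki--Radziwi\l{}\l{} machinery. The paper states, just before Lemma 2.3, that precisely this is unavailable: the Ramar\'e-identity factorization used to shorten the polynomial cannot be carried out for $|\lambda_f(n)|$ (``we are unable to reduce the size of $X$''), and no large-value/small-value dichotomy appears anywhere in the argument. The actual mechanism (Lemma 2.3) is a direct bound valid uniformly in $T$: one inserts a smooth majorant $\phi(t/T)$ whose Fourier transform is supported in $[-1,1]$, so that after expanding the square only pairs $m=n+hq$ with $|h|\le 2X/(Tq)$ survive; the diagonal contributes $\frac{T\varphi(q)}{X^{2}}\sum_{n\sim X}|\lambda_f(n)|^{2}\ll T\varphi(q)/X$ by Rankin--Selberg, while the off-diagonal is the shifted convolution sum $\sum_{0<|h|<2X/(Tq)}\sum_{n\sim X}|\lambda_f(n)\lambda_f(n+hq)|$, which Shiu's theorem (Lemma 2.2, (2.6)) together with the prime-average hypothesis (1.5) bounds so that its total contribution is $\ll(\log X)^{2\alpha-2}$. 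It is this Shiu bound on shifted convolutions --- not any large-values estimate --- that produces the factor $(\log X)^{2\alpha-2}$; the threshold $h\gg(\log X)^{2-2\alpha}$ then arises in Proposition 3.1 because the diagonal-type term of size $1/h$ must be dominated by $\varphi(q)^{-1}(\log X)^{2\alpha-2}$. So while your reduction of the corollary to the theorem is sound, your proposed route to the theorem has a genuine gap at its central estimate: the technique you invoke is the one that fails for $|\lambda_f(n)|$, and the replacement idea (shifted convolutions handled by Shiu) is absent from your sketch.
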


It is well known that for big enough $X,$
\begin{equation}\begin{split} &\sum_{n=1}^{X} |\lambda_{f}(n)|^{2}= c_{1}X+O_{f}(X^{\frac{3}{5}}),\\
&\sum_{n=1}^{X} |\lambda_{f}(n)|^{4}= c_{2}X\log X + c_{3}X+ O_{f,\epsilon}(X^{\frac{7}{8}+\epsilon})
\end{split} \end{equation}
for some $c_{1}, c_{2}, c_{3}$ (see \cite{L1}).
In our method, the upper bound of the short sum (1.7) and the sizes of $h$ in Corollary 1.2 are only depend on the long sums ((1.8), first equation) and the average of $|\lambda_{f}(p)|$ over primes (for the detail, see (2.8)). Therefore, we generalize Corollary 1.2 to arbitrary $2^{k}$ power of $|\lambda_{f}(n)|$ for $k \in \mathbb{Z^{+}}.$ 
\begin{Th} Let $X>0$ be big enough. 
Let  k be a fixed non-negative integer. Assume that there exist positive constants $\beta$ and $\gamma$ such that both inequalities 
\begin{equation} \sum_{n=X}^{2X} |\lambda_{f}(n)|^{2^{k+1}} \ll_{f} X(\log X)^{\beta}, \end{equation}
\begin{equation} \sum_{p=1}^{X} \frac{|\lambda_{f}(p)|^{2^{k}}}{p}-  \sum_{p=1}^{X} \frac{\gamma}{p} = O_{f}(1) \end{equation}
hold. Then for any real valued function $h$ such that 
 $ (\log X)^{\beta-2\gamma+2} \ll_{f} h =o(X),$  $$\sum_{n=x}^{x+h} |\lambda_{f}(n)|^{2^{k}} \ll_{f} h(\log X)^{\gamma-1}\psi(X)$$  for all but $O_{f}(X\psi(X)^{-2})$ many integers $x\in [X,2X-h].$

\end{Th}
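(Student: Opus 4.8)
The plan is to run the argument that proves Theorem 1.1 with $q=1$ (that is, Corollary 1.2), but with the nonnegative multiplicative function $g(n):=|\lambda_{f}(n)|^{2^{k}}$ in place of $|\lambda_{f}(n)|$. Note first that $g$ is multiplicative: by (1.3) the function $|\lambda_{f}|$ is multiplicative, hence so is every power, and $g$ is nonnegative with the pointwise bound $g(n)\le d(n)^{2^{k}}$ coming from the Deligne bound (1.4). The case $k=0$ is exactly Corollary 1.2, since then $g(n)=|\lambda_{f}(n)|$, hypothesis (1.9) reads $\sum_{n\sim X}|\lambda_{f}(n)|^{2}\ll X$ so that $\beta=0$ by (1.8), and (1.10) makes $\gamma=\alpha$; one checks that the constraint $(\log X)^{\beta-2\gamma+2}=(\log X)^{2-2\alpha}\ll h$ and the conclusion $h(\log X)^{\gamma-1}\psi(X)=h(\log X)^{\alpha-1}\psi(X)$ both specialize correctly. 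The real content is therefore to isolate which features of $|\lambda_{f}|$ the method of Section 2 actually uses, and to observe that these are precisely the two hypotheses (1.9) and (1.10).

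First I would reduce the almost-all statement to a single mean-square estimate. Since $g\ge 0$, for any threshold $T$ Markov's inequality applied to $S(x):=\sum_{n=x}^{x+h}g(n)$ gives
$$\#\Big\{x\in[X,2X-h]:\sum_{n=x}^{x+h}g(n)>T\Big\}\le \frac{1}{T^{2}}\sum_{X\le x\le 2X-h}\Big(\sum_{n=x}^{x+h}g(n)\Big)^{2}.$$
With $T=h(\log X)^{\gamma-1}\psi(X)$, the desired exceptional-set bound $O_{f}(X\psi(X)^{-2})$ follows as soon as one proves
$$\sum_{X\le x\le 2X-h}\Big(\sum_{n=x}^{x+h}g(n)\Big)^{2}\ll_{f} X\,h^{2}(\log X)^{2\gamma-2},$$
because the two factors $\psi(X)^{2}$ then cancel, leaving $X\psi(X)^{-2}$. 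So everything comes down to this mean-square bound, which is the $g$-analogue of the estimate (2.8) referred to in the introduction.

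To establish it I would split the short sum into its expected size plus a fluctuation. The expected size is $h\bar g$ with $\bar g:=\tfrac1X\sum_{n\sim X}g(n)$; by a Shiu/Selberg--Delange type estimate (Lemma 2.2 together with hypothesis (1.10), which forces $g(p)$ to average to $\gamma$ over primes) one gets $\bar g\asymp \prod_{p\le X}\big(1+\tfrac{g(p)-1}{p}\big)\asymp(\log X)^{\gamma-1}$, exactly as Shiu's theorem yields $(\log X)^{\alpha-1}$ in the introduction. This accounts for the main term $Xh^{2}(\log X)^{2\gamma-2}$. The fluctuation is controlled by the short-interval machinery of \cite{MR3} and \cite{MRT2} developed in Section 2, whose output is a variance bound of the shape
$$\sum_{X\le x\le 2X-h}\Big(\sum_{n=x}^{x+h}g(n)-h\bar g\Big)^{2}\ll_{f} h\sum_{n\sim X}g(n)^{2}\ll_{f} h\,X(\log X)^{\beta},$$
where the first inequality uses only multiplicativity of $g$, the bound $g(n)\le d(n)^{2^{k}}$, and the Parseval/large-values estimates, and the second uses hypothesis (1.9). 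The error is $\ll$ main precisely when $hX(\log X)^{\beta}\ll Xh^{2}(\log X)^{2\gamma-2}$, i.e. when $h\gg(\log X)^{\beta-2\gamma+2}$, which is the hypothesis on $h$; combining the two pieces gives the required mean-square bound.

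The main obstacle is the fluctuation estimate for $h$ that may be smaller than every power $X^{\delta}$, the regime where Shiu's theorem is unavailable (as noted after (1.5)). Here one must verify that the Section 2 arguments survive the passage to $g=|\lambda_{f}|^{2^{k}}$: the pointwise bound degrades from $d(n)$ to $d(n)^{2^{k}}$, and, more seriously, the second moment is only $\ll X(\log X)^{\beta}$ rather than $\ll X$, so the Dirichlet-polynomial mean-value and large-values estimates underlying (2.8) must be shown to tolerate this $(\log X)^{\beta}$ loss without shrinking the admissible range of $h$ beyond the stated constraint. Once the lemmas of Section 2 are seen to depend only on the second moment $\sum_{n\sim X}g(n)^{2}$ and on the prime average $\gamma$, the remaining work is the bookkeeping carried out above.
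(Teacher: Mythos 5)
Your overall architecture (a Chebyshev reduction followed by an $L^{2}$ bound for the short sums, with hypothesis (1.9) feeding a diagonal term and Shiu's theorem feeding an off-diagonal term) is the same as the paper's, but the key estimate you lean on is not one the cited machinery can deliver. Writing $g=|\lambda_{f}|^{2^{k}}$ and $\bar g=\frac1X\sum_{n\sim X}g(n)$, you assert that the Section 2 machinery outputs the mean-subtracted variance bound
\begin{equation*}
\sum_{X\le x\le 2X-h}\Big(\sum_{n=x}^{x+h}g(n)-h\bar g\Big)^{2}\ll_{f} h\sum_{n\sim X}g(n)^{2}.
\end{equation*}
This is a square-root-cancellation (``random model'') statement, and nothing in the paper proves anything of that strength. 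Expanding the square, your claim is equivalent to
\begin{equation*}
\sum_{0<|j|\le h}\bigl(h-|j|\bigr)\sum_{n\sim X}g(n)g(n+j)\;-\;Xh^{2}\bar g^{2}\;\ll\; hX(\log X)^{\beta},
\end{equation*}
i.e.\ to an asymptotic evaluation, on average over the shifts $j$, of the shifted convolution sums $\sum_{n}g(n)g(n+j)$. As far as the available tools go, the first quantity on the left can be as large as a constant times $h^{2}X(\log X)^{2\gamma-2}$, which dominates $hX(\log X)^{\beta}$ throughout the admissible range $h\gg(\log X)^{\beta-2\gamma+2}$; so the claim requires genuine cancellation between the off-diagonal sum and $Xh^{2}\bar g^{2}$. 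Shiu's theorem (Lemma 2.2) is an upper bound with an unspecified constant and can never produce such cancellation, asymptotics for shifted convolutions of $|\lambda_{f}(n)|^{2^{k}}$ are not known (and are used nowhere in the paper), and the Matom\"aki--Radziwi\l{}\l{} variance machinery is explicitly unavailable here because the Dirichlet polynomial cannot be factored. Indeed, the paper's actual output (Lemma 2.1 combined with Lemma 2.4, i.e.\ Proposition 3.2) involves no mean subtraction at all: it is the bound $\frac1X\int_{X}^{2X}\bigl|\frac1h\sum_{n=x}^{x+h}g(n)\bigr|^{2}dx\ll_{f}h^{-1}(\log X)^{\beta}+(\log X)^{2\gamma-2}$.

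The repair is easy because your argument never needs the variance bound: you only use it to conclude $\sum_{x}\bigl(\sum_{n=x}^{x+h}g(n)\bigr)^{2}\ll Xh^{2}(\log X)^{2\gamma-2}$, and that follows without subtracting anything. Either expand the square directly --- the diagonal contributes $\ll hX(\log X)^{\beta}$ by (1.9), and the off-diagonal contributes $\ll h\sum_{0<|j|\le h}\sum_{n\sim X}g(n)g(n+j)\ll h^{2}X(\log X)^{2\gamma-2}$ by Lemma 2.2, (2.6), together with (1.10) --- or follow the paper: run Lemma 2.1 with $g$ in place of $|\lambda_{f}|\chi$ and then apply Lemma 2.4. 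In other words, the term $Xh^{2}(\log X)^{2\gamma-2}$ is not the square of an identified main term; it is simply the Shiu upper bound for the off-diagonal contribution. With this reading, your hypothesis $h\gg(\log X)^{\beta-2\gamma+2}$ makes the diagonal term dominated by the off-diagonal bound, and your Chebyshev step with threshold $h(\log X)^{\gamma-1}\psi(X)$ finishes exactly as in the paper's Section 4. (Two minor points: Shiu gives only $\bar g\ll(\log X)^{\gamma-1}$, not $\asymp$, but only the upper bound is needed; and in your $k=0$ consistency check $\beta=0$ is not a positive constant, though any small $\beta>0$ works there.)
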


In Lemma 2.5, we prove that the average of $\lambda_{f}(p)^{2}$ over primes is 1. Therefore, the upper bound of $\sum_{n=X}^{2X} |\lambda_{f}(n)|^{2}$ from Shiu's theorem is also $O(X).$ From the above facts, we obtain the following corollary.
\begin{Cor}
 Let $X>0$ be big enough. Let $h$ be a real valued function such that 
 $\log X \ll_{f} h = o(X).$ Then  \begin{equation}\sum_{n=x}^{x+h} |\lambda_{f}(n)|^{2} \ll_{f} h\psi(X) \end{equation}  for all but $O_{f}(X\psi(X)^{-2})$ many integers  $x\in [X,2X-h].$
\end{Cor}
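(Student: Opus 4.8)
The plan is to obtain Corollary 1.4 as the special case $k=1$ of Theorem 1.3, so that the whole argument reduces to verifying the two hypotheses (1.9) and (1.10) for this value of $k$, and then reading off the resulting admissible range of $h$ and the shape of the conclusion. With $k=1$ one has $2^{k}=2$ and $2^{k+1}=4$, so (1.9) becomes a fourth-moment bound while (1.10) becomes a statement about the average of $|\lambda_{f}(p)|^{2}$ over primes. The key point to get right is that it is the choice $k=1$, not $k=0$, that produces the second power $|\lambda_{f}(n)|^{2}$ in the conclusion.

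First I would verify hypothesis (1.9). For $k=1$ it asks for a bound $\sum_{n=X}^{2X}|\lambda_{f}(n)|^{4}\ll_{f}X(\log X)^{\beta}$, and this is immediate from the fourth-moment asymptotic recorded in (1.8): since $\sum_{n=1}^{X}|\lambda_{f}(n)|^{4}=c_{2}X\log X+c_{3}X+O_{f,\epsilon}(X^{7/8+\epsilon})$, subtracting the estimates at $2X$ and $X$ gives $\sum_{n=X}^{2X}|\lambda_{f}(n)|^{4}\ll_{f}X\log X$. Hence (1.9) holds with $\beta=1$.

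Next I would verify hypothesis (1.10). For $k=1$ it requires $\sum_{p\le X}\frac{|\lambda_{f}(p)|^{2}}{p}-\sum_{p\le X}\frac{\gamma}{p}=O_{f}(1)$, and by Lemma 2.5 the average of $\lambda_{f}(p)^{2}$ over primes equals $1$, which is exactly this statement with $\gamma=1$. (The underlying input is the Hecke relation $\lambda_{f}(p)^{2}=\lambda_{f}(p^{2})+1$ together with the convergence of $\sum_{p}\lambda_{f}(p^{2})/p$, i.e. the analytic behaviour of the symmetric-square $L$-function, but I would simply cite Lemma 2.5 rather than re-derive this.) With $\beta=1$ and $\gamma=1$ the admissible range in Theorem 1.3 is $(\log X)^{\beta-2\gamma+2}=(\log X)^{1}\ll_{f}h=o(X)$, which matches the hypothesis $\log X\ll_{f}h=o(X)$ of the corollary, and the conclusion becomes $\sum_{n=x}^{x+h}|\lambda_{f}(n)|^{2}\ll_{f}h(\log X)^{\gamma-1}\psi(X)=h\psi(X)$ for all but $O_{f}(X\psi(X)^{-2})$ integers $x\in[X,2X-h]$, which is precisely (1.12).

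Since all of the genuine analytic difficulty is already contained in Theorem 1.3 (the mean-value and large-sieve type estimates of Section 2), there is no real obstacle at the level of the corollary itself. The only points requiring care are the bookkeeping of exponents described above and the confirmation that $\gamma=1$ is indeed the correct prime average, for which I would lean entirely on Lemma 2.5; the agreement of the admissible range $\log X\ll_{f}h$ with the value $(\log X)^{\beta-2\gamma+2}=\log X$ forced by $\beta=\gamma=1$ is then an automatic consistency check rather than an independent step.
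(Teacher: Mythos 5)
Your proposal is correct and follows exactly the paper's own route: the paper proves Corollary 1.4 by specializing Theorem 1.3 (equivalently, Proposition 3.2 plus Chebyshev) to $k=1$, taking $\beta=1$ from the fourth-moment asymptotic in (1.8) and $\gamma=1$ from Lemma 2.5, which yields the range $\log X\ll_{f}h=o(X)$ and the bound $h\psi(X)$ just as you computed. Your identification of $k=1$ rather than $k=0$, and the exponent bookkeeping $\beta-2\gamma+2=1$ and $(\log X)^{\gamma-1}=1$, all match the paper.
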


\begin{Rem}
We apply Shiu's theorem to get some trivial bounds.
Let $$R_{1}(x):=\sum_{n=x}^{x+h} |\lambda_{f}(n)|\chi(n),$$ $$\displaystyle K_{1}(X):= \{ x \in [X,2X-h] :  h \psi(X) \varphi(q)^{-0.5}(\log X)^{\alpha-1}\ll R_{1}(x)\}.$$ Then 
\begin{equation} 
\begin{split}
|K_{1}(X)| h\psi(X) \varphi(q)^{-0.5}(\log X)^{\alpha-1} & \leq \sum_{X \leq x \leq 2X \atop h \psi(X) \varphi(q)^{-0.5}(\log X)^{\alpha-1}\ll R_{1}(x)} |R_{1}(x)| \\
& \leq \sum_{X \leq x \leq 2X} |R_{1}(x)| \\ 
& \leq \sum_{X \leq x \leq 2X} \sum_{n=x}^{x+h} |\lambda_{f}(n)| \\
& \ll_{f} hX(\log X)^{\alpha-1}.
\end{split}
\end{equation}
Therefore, $X (\psi(X) \varphi(q)^{-0.5})^{-1}$ is a trivial bound for $|K_{1}(X)|. $
Thus the upper bound of $|K_{1}(X)|$ from Corollary 1.2 saves $\psi(X)\varphi(q)^{0.5}$\ from the trivial one.

Let $$ R_{2} (x):=\sum_{n=x,(n,q)=1}^{x+h} \,|\,\lambda_{f}(n)|^{2}, $$   $$ K_{2}(X):= \{ x \in [X,2X-h]: h \psi(X) \ll R_{2}(x) \}.$$
Then 
\begin{equation} 
\begin{split}
|K_{2}(X)| h\psi(X)  & \leq \sum_{X \leq x \leq 2X \atop h \psi(X) \ll R_{2}(x)} |R_{2}(x)| \\
& \leq \sum_{X \leq x \leq 2X} |R_{2}(x)| \\ 
& \leq \sum_{X \leq x \leq 2X} \sum_{n=x}^{x+h} |\lambda_{f}(n)|^{2} \\
& \ll_{f} hX
\end{split}
\end{equation} 
Therefore, $X(\psi(X))^{-1} $ is a trivial bound for $|K_{2}(X)|.$ Thus the upper bound of $|K_{2}(X)|$ from Corollary 1.4 saves $\psi(X)$ from the trivial one.
\end{Rem}



\section{Lemmas}
The following lemma shows that one can get some information about the average of $|\lambda_{f}(n)|\chi(n)$ in almost all short intervals from the upper bounds of the second moment of the Dirichlet polynomial
$$F(s):= \sum_{n \sim X} \frac{|\lambda_{f}(n)|\chi(n)}{n^{s}}.$$

\begin{Lemma}
Let $X>0$ be big enough, let $q$ be a natural number smaller than $X,$ and let $h=o(X).$ Then

\begin{equation}
\frac{1}{X} \int_{X}^{2X} \Big|\frac{1}{h}\sum_{n=x}^{x+h} |\lambda_{f}(n)|\chi(n) \Big|^{2}dx \end{equation}
$$\ll \int_{0}^{Xh^{-1}} \Big|\sum_{n \sim X} \frac{|\lambda_{f}(n)|\chi(n)}{n^{1+it}}\Big|^{2}dt + \max_{T>Xh^{-1}} \frac{Xh^{-1}}{T} \int_{T}^{2T} \Big|\sum_{n\sim X} \frac{|\lambda_{f}(n)|\chi(n)}{n^{1+it}}\Big|^{2}dt. $$
\end{Lemma}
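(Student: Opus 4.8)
\emph{The plan.} Write $a(n) := |\lambda_f(n)|\chi(n)$, supported on $n\sim X$, and set $T_0 := Xh^{-1}$, so that $F(s) = \sum_{n\sim X}a(n)n^{-s}$ is a finite Dirichlet polynomial. The strategy is to pass from the arithmetic short-interval average to $F(1+it)$ through a Mellin (Perron) representation, and then to extract the mean square by Plancherel's theorem in the multiplicative (logarithmic) variable. First I would record the exact identity from Perron's formula: for each $x$,
\[
\frac{1}{h}\sum_{x<n\le x+h}a(n) = \frac{1}{2\pi}\int_{-\infty}^{\infty}F(1+it)\,w_x(t)\,dt,\qquad w_x(t) := \frac{1}{h}\int_x^{x+h}u^{it}\,du,
\]
together with the uniform kernel bounds $|w_x(t)|\le 1$ and $|w_x(t)|\ll T_0/|t|$, valid for all $x\sim X$ because $|(x+h)^{1+it}-x^{1+it}|\ll X$ and $|1+it|\ge|t|$. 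Thus the frequency scale of the kernel is governed by $T_0$, uniformly in $x\in[X,2X]$.

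Next I would convert the mean square into a frequency integral. After the substitution $x = e^y$, the quantity $\frac1X\int_X^{2X}|\cdots|^2\,dx$ becomes, up to the bounded factor $e^y/X\le 2$, the integral $\int_{\log X}^{\log 2X}|\Phi(y)|^2\,dy$, where $\Phi(y)$ is the short average evaluated at $x=e^y$. Since the integrand is non-negative, I may enlarge the window to all of $\mathbb R$; because $a(n)$ is supported on $n\sim X$, $\Phi$ is in any case essentially supported near this window, so nothing is lost in order of magnitude. Applying Plancherel in $y$ then expresses the mean square as, essentially, $\int_{\mathbb R}|F(1+it)|^2\,|m(t)|^2\,dt$ for a multiplier $m(t)$ which, by the kernel bounds above, satisfies $|m(t)|\ll\min(1,T_0/|t|)$. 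Crucially, Plancherel is lossless here, whereas a cruder Cauchy–Schwarz bound against the $L^1$-mass of the kernel would overshoot by a factor of $T_0$.

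With the mean square bounded by $\int_{\mathbb R}|F(1+it)|^2\min(1,(T_0/|t|)^2)\,dt$, the two terms of the lemma fall out by a frequency decomposition. On $|t|\le T_0$ the multiplier is $O(1)$, giving the first term $\int_0^{T_0}|F(1+it)|^2\,dt$ (the negative frequencies being handled identically). For $|t|>T_0$ I would dyadically decompose into ranges $|t|\in[T,2T]$, $T = 2^jT_0$, on which the multiplier is $\ll (T_0/T)^2$; hence this tail is $\ll\sum_{T}(T_0/T)^2\int_T^{2T}|F(1+it)|^2\,dt$. Writing $(T_0/T)^2 = (T_0/T)\cdot(T_0/T)$ and pulling out the maximum, this is $\ll\bigl(\sum_{T>T_0 \text{ dyadic}}T_0/T\bigr)\max_{T>T_0}\tfrac{T_0}{T}\int_T^{2T}|F(1+it)|^2\,dt$, and the geometric sum $\sum_{T>T_0}T_0/T = O(1)$ leaves exactly the stated maximum term.

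\emph{The main obstacle.} The delicate point is the Plancherel step, precisely because the Mellin kernel $w_x(t)$ genuinely depends on $x$ and so does not factor as a single Fourier multiplier times $F(1+it)$: after taking logarithms, the additive short interval $[x,x+h]$ corresponds to a window of length $\log\frac{x+h}{x}\asymp h/x$ that varies with $x$ (and even with the summation variable). Resolving this is exactly where the restriction $x\in[X,2X]$ is used, since there $h/x\asymp 1/T_0$ is of fixed order, so the $x$-dependence only costs bounded constants; rigorously, one replaces the additive average by the corresponding multiplicative average over $[x,xe^{1/T_0}]$ and controls the difference, or equivalently invokes a Gallagher/Saffari–Vaughan-type mean-value inequality (as in \cite{MR3}, \cite{MRT2}) which performs the passage from the short-interval mean square to the frequency integral directly and uniformly in $x$. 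Once this uniform multiplier bound is in hand, the remaining frequency split and dyadic summation are routine.
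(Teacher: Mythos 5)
Your outline has the same architecture as the paper's proof (which itself adapts \cite[Lemma 14]{MR3}): Perron's formula, the kernel bounds $\min(1,T_0/|t|)$, and the final frequency decomposition — your dyadic tail with a geometric sum is exactly the paper's device $|t|^{-2}\ll\int_{it}^{2it}|T|^{-3}\,dT$ followed by taking a maximum. However, the one step you defer as ``the main obstacle'' is the entire content of the lemma, and neither of your proposed resolutions closes it. The first one — replace $[x,x+h]$ by the multiplicative interval $[x,xe^{1/T_0}]$ and ``control the difference'' — fails concretely: for $x\in[X,2X]$ the two intervals differ by the interval $[x+h,\,xe^{h/X}]$, whose length is $h(x-X)/X+O(h^{2}/X)$ and hence of size comparable to $h$ itself once $x$ is near $2X$. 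Its contribution is therefore another short-interval sum of the same order as the quantity being bounded, so ``controlling the difference'' is the original problem over again, not an error term; and since the coefficients $|\lambda_f(n)|\chi(n)$ are complex, there is no positivity or monotonicity to exploit either. The second resolution — invoke a Saffari--Vaughan/Gallagher-type mean-value inequality ``as in \cite{MR3}, \cite{MRT2}'' — is an appeal to precisely the statement under proof; moreover, the literature versions are stated with a long average (or a second scale) subtracted, i.e.\ they bound a variance, so they do not black-box yield the unsubtracted inequality claimed here.

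What actually fills the gap, and what the paper does, is the Saffari--Vaughan averaging identity
\[
\frac{(x+h)^{s}-x^{s}}{s}=\frac{1}{2h}\Big[\int_{h}^{3h}\frac{(x+w)^{s}-x^{s}}{s}\,dw-\int_{h}^{3h}\frac{(x+w)^{s}-(x+h)^{s}}{s}\,dw\Big],
\]
followed by the rescaling $w\mapsto xw$ and the mean value theorem: this replaces the $x$-dependent Perron kernel by kernels of the form $x^{s}\,\frac{(1+u)^{s}-1}{s}$ with a \emph{fixed} $u\ll h/X$, which do factor as $x^{s}$ times a multiplier satisfying your bound $\ll\min(h/X,1/|t|)$. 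Even then the paper does not use literal Plancherel (the $x$-integration runs over $[X,2X]$, not $\mathbb{R}$): it inserts a smooth majorant $g_{1}$, expands the square, integrates by parts in $x$ to obtain the off-diagonal decay $\ll X^{3}/\sqrt{|t_{1}-t_{2}|^{2}+1}$, and applies AM--GM to reduce to diagonal integrals, after which your frequency split finishes the argument. So your plan points in the right direction, but as written its crux is missing: you must either carry out this averaging-plus-expansion step or state and prove the factored-kernel mean value inequality you want to quote.
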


\begin{proof}
The proof of this basically follows from \cite[Lemma 14]{MR3}.
Since we choose the Dirichlet polynomial $F(s)$ instead of $\sum_{n=1}^{\infty} \frac{|\lambda_{f}(n)|\chi(n)|}{n^{1+it}},$ there is no issue on absolute convergence of $F(s).$
By Perron's formula, $$ \sum_{x \leq n \leq x+h} |\lambda_{f}(n)|\chi(n)= \frac{1}{2\pi i} \int_{1-i\infty}^{1+i\infty} F(s) \frac{(x+h)^{s}-x^{s}}{s}ds.$$
Let 
$$V= \frac{1}{h^{2}X} \int_{X}^{2X} \Big|\int_{1}^{1+i\infty} F(s) \frac{(x+h)^{s}-x^{s}}{s} ds \Big|^{2}dx.$$
Since $$\frac{(x+h)^{s}-x^{s}}{s} = \frac{1}{2h} \Big[\int_{h}^{3h} \frac{(x+w)^{s}-x^{s}}{s} dw - \int_{h}^{3h} \frac{(x+w)^{s}-(x+h)^{s}}{s}dw\Big],$$

\begin{equation}
\begin{split}V &\ll Xh^{-4} \int_{X}^{2X} \Big| \int_{\frac{h}{x}}^{\frac{3h}{x}} \int_{1}^{1+i\infty} F(s)x^{s} \frac{(1+w)^{s}-1}{s}dsdw\Big|^{2}dx
\\&+ Xh^{-4} \int_{X}^{2X} \Big| \int_{0}^{\frac{2h}{x+h}} \int_{1}^{1+i\infty} F(s)(x+h)^{s} \frac{(1+w)^{s}-1}{s}dsdw\Big|^{2}dx.
\end{split}
\end{equation}
By the mean value theorem, the right hand side of (2.2) is bounded by 
\begin{equation} \begin{split} \ll& \frac{1}{h^{2}X} \int_{X}^{2X} \Big| \int_{1}^{1+i\infty} F(s) x^{s} \frac{(1+u)^{s}-1}{s} ds\Big|^{2}dx
\\&+ \frac{1}{h^{2}X} \int_{X+h}^{2X+2h} \Big| \int_{1}^{1+i\infty} F(s) x^{s} \frac{(1+u)^{s}-1}{s} ds\Big|^{2}dx
\end{split} \end{equation}
for some $u\ll \frac{h}{X}.$
Let $V_{1}$ be the first summand, $V_{2}$ be the second summand of (2.3).
Let $g_{1}$ be  a smooth function supported on $[\frac{X}{2},4X], $  $g_{1}(x)=1$ for $ x \in [X,2X],$ and $g_{1}'(x) \ll \frac{1}{X}.$ Let $s_{1}=1+it_{1}, s_{2}=1+it_{2}.$  Then 

\begin{equation}
\begin{split}
V_{1} &\ll \frac{1}{h^{2}X} \int g_{1}(x)\Big| \int_{1}^{1+i\infty} F(s)x^{s} \frac{(1+u)^{s}-1}{s} ds|^{2} dx \\
   &\ll \frac{1}{h^{2}X} \int_{1}^{1+i\infty} \int_{1}^{1+i\infty} \Big|F(s_{1})\overline{F(s_{2})} \min \{ \frac{h}{X},\frac{1}{|t_{1}|}\} \min \{ \frac{h}{X},\frac{1}{|t_{2}|}\} \Big|\Big|\int g_{1}(x)x^{s_{1}+\bar{s_{2}}}dx \Big||ds_{1}ds_{2}|.
\end{split} \nonumber
\end{equation}
Since 
$$\int g_{1}(x)x^{s_{1}+\bar{s_{2}}}dx \ll  \frac{1}{X}\int_{\frac{X}{2}}^{4X} \Big|\frac{x^{s_{1}+\bar{s_{2}}+1}}{s_{1}+\bar{s_{2}}+1}\Big|dx,$$

\begin{equation}
\begin{split}
V_{1} &\ll \frac{1}{h^{2}X} \int_{1}^{1+i\infty} \int_{1}^{1+i\infty} \Big|F(s_{1})\overline{F(s_{2})} \min \{ \frac{h}{X},\frac{1}{|t_{1}|}\} \min \{ \frac{h}{X},\frac{1}{|t_{2}|}\} \frac{X^{3}}{\sqrt{|t_{1}-t_{2}|^{2}+1}}\Big| |ds_{1}ds_{2}| \\ 
& \ll \frac{X^{2}}{h^{2}} \int_{1}^{1+i\infty} \int_{1}^{1+i\infty} \frac{|F(s_{1})|^{2} \min \{(\frac{h}{X})^{2}, |t_{1}|^{-2}\} +|F(s_{2})|^{2} \min \{(\frac{h}{X})^{2}, |t_{2}|^{-2}\}}{\sqrt{|t_{1}-t_{2}|^{2}+1}} |ds_{1}ds_{2}|\\
&\ll \int_{1}^{1+i\frac{X}{h}} |F(s)|^{2} |ds| + \frac{X^{2}}{h^{2}} \int_{1+\frac{iX}{h}}^{1+i\infty} \frac{|F(s)|^{2}}{|t|^{2}}|ds|.
\end{split}
\nonumber
\end{equation}
Since $|t|^{-2}\ll \int_{it}^{2it} |T|^{-3} dT,$
\begin{equation}
\begin{split}
V_{1} &\ll  \int_{1}^{1+i\frac{X}{h}} |F(s)|^{2} |ds|  + \frac{X^{2}}{h^{2}} \int_{\frac{X}{2h}}^{\infty} \frac{1}{T^{3}} \int_{1+iT}^{1+2iT} |F(s)|^{2} |ds||dT|\\
&\ll   \int_{1}^{1+i\frac{X}{h}} |F(s)|^{2} |ds|  + \frac{X^{2}}{h^{2}} \frac{h}{X} \max_{T > \frac{X}{2h}} \frac{1}{T} \int_{1+iT}^{1+2iT} |F(s)|^{2} |ds| \\
&\ll \int_{0}^{Xh^{-1}} \Big|\sum_{n=X}^{2X} \frac{|\lambda_{f}(n)|\chi(n)}{n^{1+it}}\Big|^{2}dt + \max_{T>Xh^{-1}} \frac{Xh^{-1}}{T} \int_{T}^{2T} \Big|\sum_{n=X}^{2X} \frac{|\lambda_{f}(n)|\chi(n)}{n^{1+it}}\Big|^{2}dt .
\end{split} \nonumber
\end{equation}
Let $g_{2}$ be  a smooth function supported on $[\frac{X+h}{2},4X+4h], $  $g_{2}(x)=1$ for $ x \in [X+h,2X+2h],$ and $g_{2}'(x) \ll \frac{1}{X}.$ 
By the similar arguments of the bounding $V_{1}$ (replacing $g_{1}$ with $g_{2}$), 

\begin{equation}
    V_{2} \ll \int_{0}^{Xh^{-1}} \Big|\sum_{n=X}^{2X} \frac{|\lambda_{f}(n)|\chi(n)}{n^{1+it}}\Big|^{2}dt + \max_{T>Xh^{-1}} \frac{Xh^{-1}}{T} \int_{T}^{2T} \Big|\sum_{n=X}^{2X} \frac{|\lambda_{f}(n)|\chi(n)}{n^{1+it}}\Big|^{2}dt .
\nonumber\end{equation}
 
\end{proof}

In the proof of Lemma 2.3, we bound some type of the integral \begin{equation} \int_{-T}^{T} |F(1+it)|^{2}dt \end{equation} by some terms in which are related to the average of $|\lambda_{f}(n)|^{2}$ over $[X,2X]$ and the average of the shifted sums $\sum_{X \leq m \leq 2X} |\lambda_{f}(m) \lambda_{f}(m+hq)|$ over $h\in [1,\frac{T}{Xq}].$ The following lemma allows us to compute them.
\begin{Lemma} \rm{(Shiu's theorem} \cite[Lemma 2.3]{MRT2})

Let $0<\delta \leq 1.$ Let $1 \leq  q \leq X^{\delta},$ $1 \leq H.$ Let $r(n)$ be a non-negative multiplicative function such that $r(n) \ll d(n)^{k}$ for some $k \in \mathbb{N}$. 
For $2\leq X^{\delta}\leq Y,$ 
\begin{equation}
\sum_{n=X}^{X+Y} r(n) \ll_{\delta} Y  \prod_{p<X} (1+\frac{r(p)-1}{p}),
\end{equation}
\begin{equation}\sum_{|h|\leq H} \sum_{X \leq n \leq X+Y \atop (n,q)=1} r(n)r(n+hq) \ll_{\delta} HY \prod_{p\leq X, \atop p \nmid q}(1+\frac{r(p)-1}{p})^{2}\prod_{p \mid q} (1-\frac{1}{p}).
\end{equation}
\end{Lemma}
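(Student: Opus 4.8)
Both estimates belong to the Shiu--Nair--Tenenbaum circle of ideas, so the plan is to establish the one--variable bound (2.6) by a smooth/rough factorisation and then upgrade the argument to the two--variable estimate (2.7). Throughout, the only structural input on $r$ is that it is non-negative, multiplicative, and satisfies $r(n)\ll d(n)^{k}$, which is exactly what keeps all of the Euler products and their tails convergent.

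For (2.6) I would fix the threshold $z:=X^{\delta/2}$ and write every $n\in(X,X+Y]$ uniquely as $n=ab$, where $a$ is the $z$--smooth part of $n$ (the product of the prime powers $p^{j}\,\|\,n$ with $p\le z$) and $b=n/a$ has all prime factors exceeding $z$. Since $b\le 2X$ and each prime factor of $b$ is larger than $X^{\delta/2}$, the integer $b$ is a product of at most $K:=\lceil 2/\delta\rceil$ primes; this finiteness is what replaces the full range of divisors by a controlled sum, and it is precisely where the range hypothesis $Y\ge X^{\delta}=z^{2}$ will be used. As $a$ and $b$ have disjoint prime support, multiplicativity gives $r(n)=r(a)r(b)$, so
\[
\sum_{X<n\le X+Y} r(n)\ll \sum_{b}\, r(b)\!\!\sum_{\substack{X/b<a\le (X+Y)/b\\ a\ \text{is } z\text{-smooth}}}\!\! r(a),
\]
the outer sum running over $b\le 2X$ all of whose prime factors exceed $z$. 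The inner sum ranges over $z$--smooth integers in an interval of length $Y/b$; because $Y\ge z^{2}\ge z$, the interval is longer than the smoothness parameter (apart from the edge ranges of very large $b$, handled separately), and an upper--bound sieve gives $\sum_{a} r(a)\ll \tfrac{Y}{b}\,\tfrac{1}{\log z}\prod_{p\le z}\bigl(1+\tfrac{r(p)}{p}\bigr)$.

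It then remains to do the combinatorics of the rough part. Expanding over the at--most--$K$ large primes, I would bound $\sum_{b} \tfrac{r(b)}{b}$ by $\prod_{z<p\le X}\bigl(1+\tfrac{r(p)}{p}+O(\tfrac{1}{p^{2}})\bigr)$, multiply the smooth and rough Euler factors, and use Mertens' theorem $\tfrac{1}{\log z}\asymp \prod_{p\le z}\bigl(1-\tfrac1p\bigr)$ to absorb the logarithmic saving as the missing $-1$ in the numerator. Combining everything recovers $\prod_{p<X}\bigl(1+\tfrac{r(p)-1}{p}\bigr)$, which is (2.6).

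For (2.7) the plan is to run the same machinery on the two linear forms $n$ and $n+hq$ at once, i.e.\ to estimate $\sum_{n} (r\otimes r)(n,n+hq)$ as a two--variable multiplicative sum of Nair--Tenenbaum type. After imposing smooth/rough thresholds on both arguments the sum factors, up to the coprimality constraint, into a product of local factors over primes. The condition $(n,q)=1$ forces $(n+hq,q)=(n,q)=1$, so the primes $p\mid q$ contribute the sieve density $\prod_{p\mid q}\bigl(1-\tfrac1p\bigr)$, while a prime $p\nmid q$ divides at most one of $n,\,n+hq$ away from the thin diagonal $p\mid hq$, producing two essentially independent copies of the local factor and hence the square $\prod_{p\le X,\,p\nmid q}\bigl(1+\tfrac{r(p)-1}{p}\bigr)^{2}$; the average over $|h|\le H$ then supplies the factor $H$. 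The main obstacle is exactly this last estimate: unlike (2.6) it is genuinely two--dimensional, and one must control the diagonal primes $p\mid hq$ where the two forms are correlated and check that the exponent $2$ and the precise $q$--dependence emerge rather than take them on faith. In practice I would invoke the Nair--Tenenbaum bound for sums of multiplicative functions over values of linear forms and simply verify that its normalisation matches the right--hand side of (2.7).
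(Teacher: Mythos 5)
Your plan for (2.6) has the smooth/rough roles inverted, and this is a genuine gap, not a presentational one. In your decomposition $n=ab$ you sum over the rough part $b$ on the outside and claim that ``an upper-bound sieve'' bounds $\sum_{a} r(a)$, taken over $z$-smooth $a$ in an interval of length $Y/b$, by $\frac{Y}{b}\frac{1}{\log z}\prod_{p\le z}\bigl(1+\frac{r(p)}{p}\bigr)$. Sieve theory gives upper bounds for integers \emph{free of small prime factors} (rough numbers) in an interval; it says nothing about $z$-smooth numbers in an interval, and no standard sieve yields the inner bound you assert. Worse, the range you set aside as ``edge ranges of very large $b$'' is in fact the dominant range: for a typical $n\in(X,X+Y]$ the smooth part $a$ is small and $b\asymp X$ (e.g.\ $n$ prime gives $a=1$, $b=n$), so the inner interval has length $Y/b<1$ and your claimed bound $\ll \frac{Y}{b}(\cdots)$ is then smaller than the actual contribution $r(a)$ of the single integer that interval may contain. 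The case you defer is thus the whole theorem.

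The repair is exactly Shiu's arrangement, which is the mirror image of yours: fix the smooth part $a$ (outer sum, weighted by $r(a)$ and controlled by $\sum_{a\ z\text{-smooth}} r(a)/a \le \prod_{p\le z}\bigl(1+\frac{r(p)}{p}+\frac{r(p^{2})}{p^{2}}+\cdots\bigr)$), and for each $a$ count the \emph{rough} cofactors $b$ lying in an interval of length $Y/a$; this is a legitimate sieve problem and gives $\ll \frac{Y}{a}\prod_{p\le z}\bigl(1-\frac1p\bigr)$ whenever $Y/a\ge z^{2}$. Here you must use $r(b)\ll_{\delta,k}1$, which is the actual payoff of $b$ having at most $\lceil 2/\delta\rceil$ prime factors --- you record this fact but never exploit it, keeping $r(b)$ as a weight instead. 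The complementary range (integers $n$ with abnormally large smooth part) is handled by Rankin's trick together with $r(n)\ll d(n)^{k}\ll_{\epsilon} n^{\epsilon}$; with these changes your Mertens bookkeeping does recover (2.6). For (2.7) you ultimately just invoke the Nair--Tenenbaum bound for linear forms, which is a citation rather than a proof --- and, for what it is worth, that is all the paper itself does, since its entire proof of the lemma is the reference to Lemma 2.3 of \cite{MRT2}. If you take that route, the point requiring care is precisely the one you gesture at: the uniformity of the implied constant in the ``discriminant'' $hq$ of the pair of forms $n$, $n+hq$, which the classical Nair--Tenenbaum statement does not give and which is supplied by Henriot's refinement; the factor $\prod_{p\mid q}\bigl(1-\frac1p\bigr)$ with an absolute constant is exactly what that refinement buys.
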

\begin{proof}
See \cite[Lemma 2.3]{MRT2}.
\end{proof}
Let 
\begin{equation} A(s):= \sum_{n \sim X} a_{n}n^{-s} \end{equation} for some $\{a_{n}\} \in \mathbb{C}.$ The standard method for bounding the second moment of $A(s)$ is the mean value theorem
$$ \int_{-T}^{T} |A(it)|^{2}dt = O\big((T+X)\sum_{n \sim X} |a_{n}|^{2}\big)$$ (see \cite[Theorem 9.2]{IK1}). By factoring $A(s)$ to reduce the size of the length of the Dirichlet polynomials, one can obtain some nontrivial bounds of the second moment of $A(s)$ from the above mean value theroem (see Section 5, \cite{MR3}. In \cite{MR3}, K. Matom\"a{}ki, M. Radziwi\l{}\l{} applied the Ramare identity, an analogue of the Buchstab identity). But in our case, we are unable to reduce the size of $X.$ Therefore, we apply the following lemma.

\begin{Lemma}
Let $X>0$ be big enough, let $q$ be a natural number smaller than $X.$ Then
\begin{equation}
\sum_{\chi (\mathrm{mod} \thinspace q)}\int_{-T}^{T} \Big|\sum_{n\sim X} \frac{|\lambda_{f}(n)|\chi(n)}{n^{1+it}}\Big|^{2}dt \ll_{f} \frac{T\varphi(q)}{X^{2}}\sum_{n \sim X} |\lambda_{f}(n)|^{2} + (\log X)^{2\alpha-2}.
\end{equation}
\end{Lemma}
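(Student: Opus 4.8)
The plan is to open the square, sum over the characters by orthogonality, and then peel off a diagonal term (which produces the first term of the claimed bound) from an off-diagonal term (which I control by shifted convolution sums and Shiu's theorem). Writing $a_n:=|\lambda_f(n)|$ for $n\sim X$, I first expand and integrate term by term:
$$\sum_{\chi(\bmod q)}\int_{-T}^T\Big|\sum_{n\sim X}\frac{a_n\chi(n)}{n^{1+it}}\Big|^2 dt=\sum_{m,n\sim X}\frac{a_m a_n}{mn}\Big(\sum_{\chi(\bmod q)}\chi(m)\overline{\chi(n)}\Big)\int_{-T}^T\Big(\frac{n}{m}\Big)^{it}dt.$$
By orthogonality of Dirichlet characters the inner character sum equals $\varphi(q)$ when $m\equiv n\pmod q$ and $(mn,q)=1$, and vanishes otherwise; this is the step that creates the factor $\varphi(q)$ and confines the pairs $(m,n)$ to a single residue class modulo $q$.

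Next I would evaluate the $t$-integral. On the diagonal $m=n$ it is exactly $2T$, contributing $2T\varphi(q)\sum_{n\sim X,\,(n,q)=1}a_n^2/n^2\ll \frac{T\varphi(q)}{X^2}\sum_{n\sim X}|\lambda_f(n)|^2$, which is precisely the first term of the asserted bound. For $m\neq n$ one has $\int_{-T}^T(n/m)^{it}dt=2\sin(T\log(n/m))/\log(n/m)$, which I would bound by $\min\{2T,\,2/|\log(n/m)|\}$.

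For the off-diagonal contribution I would write $m=n+hq$ with $h\neq0$ and $|h|\ll X/q$, and use that for $m,n\sim X$ one has $n/m\in[\tfrac12,2]$, hence $|\log(n/m)|\asymp |h|q/X$ and $1/|\log(n/m)|\ll X/(|h|q)$. This reduces the off-diagonal to the shifted convolution sums $S_h:=\sum_{n\sim X,\,(n,q)=1}|\lambda_f(n)||\lambda_f(n+hq)|$ carrying the weight $\min\{T,\,X/(|h|q)\}$. Since $|\lambda_f|$ is a non-negative multiplicative function with $|\lambda_f(n)|\le d(n)$ by (1.4), Shiu's theorem (2.6) applies with $r=|\lambda_f|$ and $Y\asymp X$, giving $\sum_{|h|\le H}S_h\ll HX\prod_{p\le X,\,p\nmid q}\big(1+\tfrac{|\lambda_f(p)|-1}{p}\big)^2\prod_{p\mid q}\big(1-\tfrac1p\big)$. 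By (1.5) one has $\sum_{p<X}\tfrac{|\lambda_f(p)|-1}{p}=(\alpha-1)\log\log X+O_f(1)$, so the first Euler product is $\asymp(\log X)^{2\alpha-2}$, while $\prod_{p\mid q}(1-\tfrac1p)=\varphi(q)/q$. Combining these via partial summation in $h$ (splitting the range at $|h|\asymp X/(Tq)$, according to which term of the minimum dominates) yields the factor $(\log X)^{2\alpha-2}$ and completes the estimate.

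The main obstacle is the off-diagonal analysis: one must pass correctly from the oscillatory integral to the shifted convolution sums $S_h$, check that Shiu's theorem is applicable (multiplicativity of $|\lambda_f|$ and the Deligne bound (1.4)), and then marry the resulting Euler-product estimate to the weight $X/(|h|q)$ by partial summation, all while keeping the $T$-dependence isolated in the diagonal term. The orthogonality step and the diagonal evaluation are routine; the genuine saving of $(\log X)^{2\alpha-2}$ over the trivial mean-value bound is exactly where the small average size of $|\lambda_f(p)|$, encoded in (1.5), is used.
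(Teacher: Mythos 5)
Your skeleton matches the paper's (orthogonality to extract $\varphi(q)$ and restrict to $m\equiv n \pmod q$, a diagonal term giving $\frac{T\varphi(q)}{X^2}\sum_{n\sim X}|\lambda_f(n)|^2$, Shiu's bound (2.6) for the shifted sums, and the Euler-product computation as in (2.9)), but there is a genuine gap in your off-diagonal analysis, and it is exactly at the step you wave through with ``partial summation in $h$.'' With the sharp cutoff $\int_{-T}^{T}$ you get the kernel $\min\{2T,\,2/|\log(m/n)|\}$, i.e.\ the weight $\min\{T,\,X/(|h|q)\}$ on $S_h$. But the only control you have on the $S_h$ is Shiu's \emph{averaged} bound $\sum_{|h|\le H}S_h\ll HXP$, where $P$ denotes the Euler-product factor. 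Carry out the dyadic/partial summation honestly: in the range $|h|\le X/(Tq)$ the weight is $T$ and you get $\frac{\varphi(q)}{X^2}\cdot T\cdot\frac{X}{Tq}\cdot XP=\frac{\varphi(q)}{q}P$, which is fine; but in the tail $X/(Tq)<|h|\ll X/q$, each dyadic block $|h|\asymp H$ contributes
\begin{equation}
\frac{\varphi(q)}{X^2}\cdot\frac{X}{Hq}\cdot\Big(\sum_{|h|\le 2H}S_h\Big)\;\ll\;\frac{\varphi(q)}{X^2}\cdot\frac{X}{Hq}\cdot HXP\;=\;\frac{\varphi(q)}{q}P,
\nonumber
\end{equation}
the \emph{same} amount per block, and there are $\asymp\log\big(\min\{T,X/q\}\big)$ blocks. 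The slowly decaying $1/|\log(m/n)|$ tail therefore costs you an extra factor of $\log$, and your method proves only $(\log X)^{2\alpha-2}\log X$, not $(\log X)^{2\alpha-2}$. This is not a cosmetic loss: since $\alpha\le\frac{17}{18}$, the whole point of the lemma is that $(\log X)^{2\alpha-2}=o(1)$, and the extra $\log$ would propagate through Proposition 3.1 and weaken Theorem 1.1 from $(\log X)^{\alpha-1}$ to $(\log X)^{\alpha-\frac12}$.

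The missing idea is the smoothing device the paper uses: before opening the square, majorize the sharp cutoff by $\mathbf{1}_{[-T,T]}(t)\le\phi(t/T)$, where $\phi\ge 0$ is smooth, $\phi\ge 1$ on $[-1,1]$, and $\hat\phi$ is \emph{compactly supported}. Then the $t$-integral of $(n/m)^{it}\phi(t/T)$ equals $T\hat\phi\big(cT\log(m/n)\big)$, which vanishes outright unless $|m-n|\ll X/T$, i.e.\ unless $|h|\ll X/(Tq)$, and is $O(T)$ on its support. So every surviving off-diagonal term carries weight $O(T)$ and lies in a single range to which one application of Shiu's bound (2.6) with $H=2X/(Tq)$ applies — there is no tail at all, hence no harmonic sum over dyadic blocks and no log loss. (A secondary, minor point: your claim that the first Euler product alone is $\asymp(\log X)^{2\alpha-2}$ is not uniform in $q$, since $\sum_{p\mid q}1/p$ is unbounded; as in the paper's (2.9), you must combine it with the factors $\frac{\varphi(q)}{q}\prod_{p\mid q}(1-\frac1p)$ and use $|\lambda_f(p)|\ge 0$ before invoking (1.5). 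Your final assembled expression does contain these factors, so this is fixable sloppiness rather than a gap.)
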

\begin{proof}
The proof of this basically follows from \cite[Lemma 5.2]{MRT2}.
Let $$I:= \sum_{\chi(\mathrm{mod} \thinspace q)}\int_{-T}^{T} \Big|\sum_{n\sim X} \frac{|\lambda_{f}(n)|\chi(n)}{n^{1+it}}\Big|^{2}dt.$$
Let $\phi$ be a non-negative smooth function such that $\phi \geq 1$ for $|x| \leq 1, \hat{\phi}(x)=0$ for $1<|x|,$ in which $$\hat{\phi}(x):= \int_{-\infty}^{\infty} \phi(t)e(-xt)dt.$$ 
Then 
$$I \leq \sum_{\chi (\mathrm{mod} \thinspace q)} \int_{-\infty}^{\infty} \Big| \sum_{ n \sim X} \frac{|\lambda_{f}(n)|\chi(n)}{n^{1+it}}\Big|^{2} \phi(\frac{t}{T}) dt $$
$$=\sum_{\chi (\mathrm{mod} \thinspace q)} \sum_{m,n\sim X} \frac{|\lambda_{f}(m) \lambda_{f}(n)|}{(mn)}\chi(m)\overline{\rm \chi(n)} T \hat{\phi}(T\log (\frac{m}{n})).$$
For each fixed $n$, the range of $m$ is decided by the compact support of $\hat{\phi}$  $(m =n+h, |h|\leq \frac{2X}{T})$, and by averaging over characters $\chi (\mathrm{mod} \thinspace q)$, 

\begin{equation} I \ll \varphi(q)\frac{T}{X^{2}} \sum_{n \sim X \atop (n,q)=1} |\lambda_{f}(n)|^{2} + \varphi(q)\frac{T}{X^{2}}\sum_{0<|h|< \frac{2X}{Tq}} \sum_{ n\sim X \atop (n,q)=1} |\lambda_{f}(n)\lambda_{f}(n+hq)|.\nonumber\end{equation} 
By Lemma 2.2,(2.6),
 $$\sum_{0<|h|< \frac{2X}{Tq}} \sum_{n \sim X \atop (n,q)=1} |\lambda_{f}(n)\lambda_{f}(n+hq)|\ll_{f} \frac{2X}{Tq}X\prod_{p \leq X \atop p\nmid q}(1+\frac{|\lambda_{f}(p)|-1}{p})^{2}\prod_{p \mid q} (1-\frac{1}{p}).$$ 
By (1.4), $||\lambda_{f}(p)|-1|\leq 1$ for all prime $p.$ By Taylor expansion and (1.5),
\begin{equation} \begin{split}
 \log\big(\prod_{p \leq X }(1+\frac{|\lambda_{f}(p)|-1}{p})\big) &=  \sum_{p \leq X} \log (1+\frac{|\lambda_{f}(p)|-1}{p}) \\& =  \sum_{p \leq X} \frac{|\lambda_{f}(p)|-1}{p}+O(1)  
 \\&= \sum_{p \leq X} \frac{\alpha-1}{p}+O_{f}(1).
 \end{split} \nonumber \end{equation}
By (1.4), \begin{equation} \begin{split} \log(\frac{\varphi(q)}{q} \prod_{p|q}(1+\frac{|\lambda_{f}(p)|-1}{p})^{-2}(1-\frac{1}{p}))&=\sum_{p|q} \log (1-\frac{1}{p})^{2}(1+\frac{|\lambda_{f}(p)|-1}{p})^{-2} \\& = \sum_{p|q} \frac{-2|\lambda_{f}(p)|}{p}+O(1) \\& \ll 1. 
 \end{split}  \end{equation}
Therefore, the 2nd term of the right-hand side of (2.8) is bounded by  $$(\log X)^{2\alpha-2}.$$
\end{proof}

 Notice that the absolute constant of the inequality (2.8) does not depend on $q.$ but in (2.9), one can produces a saving factor from $\sum_{p|q} \frac{-2|\lambda_{f}(p)|}{p}$ for some $q.$ This saving factor can be crucial when we treat $|\lambda_{f}(n)|^{2^{k}}$ for some big $k.$ Let $k \in \mathbb{N},$  $|\lambda_{f}(2)|=2,$ $q=2.$ Then 
 
 \begin{equation} \begin{split} \log\big(\frac{\varphi(q)}{q} \prod_{p|q}(1+\frac{|\lambda_{f}(p)|^{2^{k}}-1}{p})^{-2}(1-\frac{1}{p})\big)&=\ \log (1-\frac{1}{2})^{2}(1+\frac{2^{2^{k}}-1}{2})^{-2} \\& = -2\log (2^{2^{k}}+1).
 \end{split}  \end{equation}
 Therefore, the last term of (2.10) is heavily depend on $k.$ So we only generalize Lemma 2.3 for $q=1.$   
 
 \begin{Lemma} Let $X>0$ be big enough. 
Let  k be a fixed non-negative integer. Assume that there exist positive constants $\beta$ and $\gamma$ such that both inequalities 
\begin{equation} \sum_{n=X}^{2X} |\lambda_{f}(n)|^{2^{k+1}} \ll_{f} X(\log X)^{\beta}, \end{equation}
\begin{equation} \sum_{p=1}^{X} \frac{|\lambda_{f}(p)|^{2^{k}}}{p}-\sum_{p=1}^{X} \frac{\gamma}{p}= O_{f}(1) \end{equation} hold.
Then
\begin{equation}
\int_{-T}^{T} \Big|\sum_{n\sim X} \frac{|\lambda_{f}(n)|^{2^{k}}}{n^{1+it}}\Big|^{2}dt \ll_{f} \frac{T}{X} (\log X)^{\beta} + (\log X)^{2\gamma-2}.
\end{equation}
 \end{Lemma}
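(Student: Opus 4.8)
The plan is to follow the proof of Lemma 2.3 almost verbatim, now with the trivial character ($q=1$) and with $|\lambda_{f}(n)|$ replaced by $|\lambda_{f}(n)|^{2^{k}}$; the hypotheses (2.11) and (2.12) will play the roles that (1.8) and (1.5) played there. First I would introduce the same smooth majorant $\phi$ (nonnegative, $\phi\geq 1$ on $[-1,1]$, with $\hat{\phi}$ supported in $[-1,1]$) and bound the integral over $[-T,T]$ by $\int_{\mathbb{R}}\phi(t/T)\big|\sum_{n\sim X}\frac{|\lambda_{f}(n)|^{2^{k}}}{n^{1+it}}\big|^{2}\,dt$. Expanding the square and integrating in $t$ turns this into
$$\sum_{m,n\sim X} \frac{|\lambda_{f}(m)|^{2^{k}}|\lambda_{f}(n)|^{2^{k}}}{mn}\, T\hat{\phi}\big(T\log(m/n)\big),$$
where the compact support of $\hat{\phi}$ forces $m=n+h$ with $|h|\ll X/T$.

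Next I would isolate the diagonal $m=n$. Each coefficient weight is $\ll T/X^{2}$, and on the diagonal the summand is $|\lambda_{f}(n)|^{2^{k+1}}$, so hypothesis (2.11) gives $\frac{T}{X^{2}}\sum_{n\sim X}|\lambda_{f}(n)|^{2^{k+1}}\ll \frac{T}{X}(\log X)^{\beta}$, which is the first term on the right of (2.13). For the off-diagonal I would apply Shiu's theorem (Lemma 2.2, inequality (2.6)) with $q=1$, $H\ll X/T$, $Y=X$, and the nonnegative multiplicative function $r(n)=|\lambda_{f}(n)|^{2^{k}}$, which is indeed multiplicative and satisfies $r(n)\ll d(n)^{2^{k}}$ by the Deligne bound (1.4). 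This bounds $\sum_{0<|h|\ll X/T}\sum_{n\sim X}r(n)r(n+h)$ by $\ll \frac{X}{T}\cdot X\prod_{p\leq X}(1+\frac{r(p)-1}{p})^{2}$, so after multiplying by the weight $T/X^{2}$ the off-diagonal contributes $\ll \prod_{p\leq X}(1+\frac{r(p)-1}{p})^{2}$.

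It remains to show this Euler product is $\ll(\log X)^{2\gamma-2}$. Taking logarithms and Taylor-expanding, $\log\prod_{p\leq X}(1+\frac{r(p)-1}{p})=\sum_{p\leq X}\frac{r(p)-1}{p}+O(1)$; hypothesis (2.12) together with Mertens' theorem then turns the main sum into $\sum_{p\leq X}\frac{\gamma-1}{p}+O_{f}(1)=(\gamma-1)\log\log X+O_{f}(1)$, so the product is $\ll(\log X)^{\gamma-1}$ and its square is $\ll(\log X)^{2\gamma-2}$, as needed. Adding the diagonal and off-diagonal contributions gives exactly (2.13).

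The only place this argument differs substantively from Lemma 2.3, and thus the main point to watch, is the Taylor-expansion step. There one has $|\lambda_{f}(p)|-1\in[-1,1]$, whereas here $r(p)-1=|\lambda_{f}(p)|^{2^{k}}-1$ can be as large as $2^{2^{k}}-1$. This is still a constant independent of $p$, so $\frac{r(p)-1}{p}\to 0$ and $\sum_{p}\frac{(r(p)-1)^{2}}{p^{2}}$ converges, keeping the error term $O(1)$; hence the expansion goes through unchanged. This is also precisely why the lemma is restricted to $q=1$: for $q>1$ the correction factor $\prod_{p\mid q}(1+\frac{r(p)-1}{p})^{-2}(1-\frac1p)$ would produce a term $\sum_{p\mid q}\frac{-2|\lambda_{f}(p)|^{2^{k}}}{p}$ depending heavily on $k$, as noted in (2.10). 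A final check of the edge case $T>2X$, where $H<1$ leaves only the diagonal term, completes the argument.
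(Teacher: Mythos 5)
Your proposal is correct and follows essentially the same route as the paper: the paper's proof simply invokes ``the similar argument for $I$ in Lemma 2.3'' with $q=1$ to get the diagonal/off-diagonal decomposition, then applies hypothesis (2.11) to the diagonal and Shiu's theorem plus the Taylor-expansion of the Euler product (via hypothesis (2.12)) to the off-diagonal, exactly as you do. Your extra care at the Taylor-expansion step, noting that $|\lambda_{f}(p)|^{2^{k}}-1$ may exceed $1$ but is still a bounded constant so the error remains $O_{f}(1)$, fills in a detail the paper leaves implicit rather than diverging from it.
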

 \begin{proof}
 Let $$I_{k}:= \int_{-T}^{T} \Big|\sum_{n\sim X} \frac{|\lambda_{f}(n)|^{2^{k}}}{n^{1+it}}\Big|^{2}dt.$$
 By the similar argument for $I$ in Lemma 2.3 ($q=1$),
 
  \begin{equation} I_{k} \ll \frac{T}{X^{2}} \sum_{n \sim X} |\lambda_{f}(n)|^{2^{k+1}} + \frac{T}{X^{2}}\sum_{0<|h|< \frac{2X}{T}} \sum_{ n\sim X} |\lambda_{f}(n)^{2^{k}}\lambda_{f}(n+h)^{2^{k}}|.\end{equation} 
 By (2.11),  
 $$\frac{T}{X^{2}} \sum_{n \sim X} |\lambda_{f}(n)|^{2^{k+1}} \ll_{f} \frac{T}{X}(\log X)^{\beta}.$$ 
 By Lemma 2.2 (2.6), 
 \begin{equation} \begin{split}\sum_{0<|h|< \frac{2X}{T}} \sum_{n \sim X} |\lambda_{f}(n)^{2^{k}}\lambda_{f}(n+h)^{2^{k}}|&\ll \frac{2X}{T}X\prod_{p \leq X}(1+\frac{|\lambda_{f}(p)|^{2^{k}}-1}{p})^{2}\\& \ll \frac{X^{2}}{T} (\log X)^{2\gamma-2}. \end{split}\end{equation}
 Therefore, 
 $$I_{k} \ll_{f} \frac{T}{X} (\log X)^{\beta} + (\log X)^{2\gamma-2}.$$

 \end{proof}
The following lemma shows that the average of $\lambda_{f}(p)^{2}$ over primes is 1. 
\begin{Lemma} Let $X>0$ be big enough. Then
$$\sum_{p<X} \frac{\lambda_{f}(p)^{2}}{p}= \sum_{p<X}\frac{1}{p} +O_{f}(1)$$
\end{Lemma}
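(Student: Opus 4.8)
The statement to prove is that $\sum_{p<X} \lambda_f(p)^2/p = \sum_{p<X} 1/p + O_f(1)$, i.e. that the average value of $\lambda_f(p)^2$ over primes is $1$. The natural approach is to exploit the Hecke relation~(1.3). Taking $m=n=p$ a prime in~(1.3), the only common divisors are $d=1$ and $d=p$, so $\lambda_f(p)^2 = \lambda_f(p^2) + \lambda_f(1) = \lambda_f(p^2)+1$. Hence the problem reduces to showing
$$\sum_{p<X} \frac{\lambda_f(p^2)}{p} = O_f(1).$$

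**Reducing to a prime sum of Hecke eigenvalues at $p^2$.**
To control $\sum_{p<X} \lambda_f(p^2)/p$ I would invoke the analytic theory of the symmetric-square $L$-function. The key point is that $\lambda_f(p^2)$ is the $p$-th coefficient attached (up to the shift coming from $\lambda_f(p^2)=\lambda_f(p)^2-1$) to $L(s,\mathrm{sym}^2 f)$, which is a degree-$3$ $L$-function known (by Shimura, and the Gelbart--Jacquet lift) to be entire and non-vanishing on the line $\Re(s)=1$. By the standard Prime Number Theorem machinery for such an $L$-function one obtains $\sum_{p<X}\lambda_f(p^2)\log p \ll_f X e^{-c\sqrt{\log X}}$, or at the very least $\sum_{p<X}\lambda_f(p^2)/p^{s}$ converges appropriately at $s=1$ because $\log L(s,\mathrm{sym}^2 f)$ has no pole there. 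Partial summation then converts this into the convergence (boundedness) of $\sum_{p<X}\lambda_f(p^2)/p$, giving the desired $O_f(1)$.

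**The main obstacle.**
The delicate step is the passage from a zero-free region (or the non-vanishing of $L(s,\mathrm{sym}^2 f)$ at $s=1$) to the clean bound $\sum_{p<X}\lambda_f(p^2)/p = O_f(1)$; one must be careful that the Euler factors of $L(s,\mathrm{sym}^2 f)$ encode $\lambda_f(p^2)$ together with contributions from $\lambda_f(p)$ and the trivial term, so that matching Dirichlet coefficients requires the relation $\lambda_f(p^2)=\lambda_f(p)^2-1$ and the multiplicativity~(1.3) to be tracked consistently across $p$ and $p^2$. Once this bookkeeping is done and the non-vanishing on $\Re(s)=1$ is in hand, boundedness of the prime sum follows from a Mertens-type argument applied to $\log L(s,\mathrm{sym}^2 f)$. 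I expect the analytic input (entirety and non-vanishing of $\mathrm{sym}^2 f$) to be the substantive ingredient, while the combinatorial reduction via~(1.3) is routine.
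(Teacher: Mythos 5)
Your proposal is correct, but it is organized differently from the paper's proof. The paper works directly with the degree-four Rankin--Selberg convolution $L(f\otimes\bar f,s)$: citing its zero-free region (\cite[Theorem 5.44]{IK1}) and the prime number theorem for $L$-functions (\cite[Theorem 5.13]{IK1}), it gets $\sum_{p\le x}\lambda_f(p)^2\Lambda(p)=x+O\big(x(\log x)e^{-C\log^{1/2}x}\big)$, the main term $x$ coming from the pole of the Rankin--Selberg $L$-function at $s=1$, and then concludes by partial summation in one stroke. You instead split $\lambda_f(p)^2=1+\lambda_f(p^2)$ via the Hecke relation (1.3), handle the term $1$ by classical Mertens, and reduce to showing $\sum_{p<X}\lambda_f(p^2)/p=O_f(1)$, which you obtain from the entirety and non-vanishing on $\Re(s)=1$ of the degree-three $L(s,\mathrm{sym}^2 f)$ (Shimura, Gelbart--Jacquet). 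The two routes are equivalent through the factorization $L(f\otimes\bar f,s)=\zeta(s)\,L(s,\mathrm{sym}^2 f)$: the pole that the paper exploits is exactly the zeta factor you peel off by hand at the level of coefficients. What your version buys: the main term $\sum_{p<X}1/p$ comes from elementary Mertens rather than from an automorphic prime number theorem, and from the automorphic side you need only boundedness of a prime sum, not an asymptotic. What the paper's version buys: no Euler-factor bookkeeping between $p$ and $p^2$, and both analytic inputs (zero-free region and PNT) are covered by two citations to a single standard reference. The analytic depth required is essentially the same in both arguments, so this is a stylistic rather than substantive difference.
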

\begin{proof}
Let $$L(g,s):= \sum_{n=1}^{\infty} \lambda_{f}(n)n^{-s}.$$
Let $\Lambda$ be the von Mangoldt function. 
$L(g \otimes \bar{g},s)$ has a zero free region by \cite[Theorem 5.44]{IK1}.
By \cite[Theorem 5.13]{IK1}, 
$$\sum_{p \leq x} \lambda_{f}(p)^{2}\Lambda(p) = x + O(x(\log x) e^{-C\log ^{\frac{1}{2}}x})$$
for some absolute constant $C>0$ depending only on $g.$ Partial summation over $p$ gives  
$$\sum_{1< p \leq x} \frac{\lambda_{f}(p)^{2}}{p} = \int_{2}^{x} \frac{1}{t\log t} d(\sum_{p \leq t} \lambda_{f}(p)^{2}\Lambda(p)) +O_{f}(1)= \log \log x +O_{f}(1)=\sum_{p \leq x} \frac{1}{p} +O_{f}(1).$$ 

\end{proof}

\section{Propositions}
In this section, we prove Proposition 3.1, Proposition 3.2. We need Proposition 3.1, Proposition 3.2 for Theorem 1.1, Theorem 1.3 respectively.  
\begin{Prop}Let $X>0$ be big enough, let $q$ be a natural number smaller than $X.$  Then there exists a Dirichlet character  $\chi$ modulo $q$ such that
 when $\varphi(q)(\log X)^{2-2\alpha}\ll_{f} h=o(X),$
$$\frac{1}{X} \int_{X}^{2X} \big|\frac{1}{h}\sum_{n=x}^{x+h} |\lambda_{f}(n)|\chi(n)\big|^{2} dx \ll_{f} \varphi(q)^{-1} (\log X)^{2\alpha-2}.$$

\end{Prop}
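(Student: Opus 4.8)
The plan is to deduce the Proposition from the mean-square bound of Lemma 2.1 together with the character-averaged second-moment estimate of Lemma 2.3, selecting a good $\chi$ by an averaging (pigeonhole) argument over the characters modulo $q$. First I fix a character $\chi \pmod q$ and write $F(1+it)=\sum_{n\sim X}\frac{|\lambda_{f}(n)|\chi(n)}{n^{1+it}}$. By Lemma 2.1 the left-hand side of the Proposition is
\[
\ll G(\chi):=\int_{0}^{Xh^{-1}}|F(1+it)|^{2}\,dt+\max_{T>Xh^{-1}}\frac{Xh^{-1}}{T}\int_{T}^{2T}|F(1+it)|^{2}\,dt.
\]
Since only one good character is needed, it suffices to show $\sum_{\chi \pmod q}G(\chi)\ll_{f}(\log X)^{2\alpha-2}$; then some $\chi$ satisfies $G(\chi)\leq\varphi(q)^{-1}\sum_{\chi}G(\chi)\ll_{f}\varphi(q)^{-1}(\log X)^{2\alpha-2}$, which is the claim.

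For the first integral I would bound $\int_{0}^{Xh^{-1}}\leq\int_{-Xh^{-1}}^{Xh^{-1}}$ and apply Lemma 2.3 with $T=Xh^{-1}$, inserting the long-sum bound $\sum_{n\sim X}|\lambda_{f}(n)|^{2}\ll_{f}X$ from (1.8). This yields $\sum_{\chi}\int_{0}^{Xh^{-1}}|F|^{2}\,dt\ll_{f}\varphi(q)/h+(\log X)^{2\alpha-2}$, and the hypothesis $\varphi(q)(\log X)^{2-2\alpha}\ll_{f}h$ forces $\varphi(q)/h\ll(\log X)^{2\alpha-2}$, so this term is acceptable. Notice that this is exactly the constraint that pins down the admissible range of $h$.

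The hard part will be the maximal term, because $\sum_{\chi}\max_{T}$ cannot be replaced by $\max_{T}\sum_{\chi}$, whereas Lemma 2.3 only controls the latter. The key step I would take is to peel off the diagonal first. The diagonal $m=n$ in $|F(1+it)|^{2}$ contributes the constant $D:=\sum_{n\sim X,\,(n,q)=1}|\lambda_{f}(n)|^{2}/n^{2}\ll_{f}X^{-1}$ (again by (1.8)), which is independent of both $t$ and $\chi$; it contributes exactly $Xh^{-1}D\ll_{f}h^{-1}$ to $\frac{Xh^{-1}}{T}\int_{T}^{2T}|F|^{2}$ for every $T$ and every $\chi$, so it requires no supremum and sums to $\varphi(q)/h\ll(\log X)^{2\alpha-2}$. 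For the remaining off-diagonal part I would decompose $T>Xh^{-1}$ dyadically via $T_{j}=2^{j}Xh^{-1}$, dominate $\int_{T}^{2T}$ by a fixed smooth majorant at scale $T_{j}$ so that the sum over $\chi$ is carried out on a non-negative quantity exactly as in the proof of Lemma 2.3, and then bound $\max_{j}$ by $\sum_{j}$. The crucial feature is that the off-diagonal estimate in Lemma 2.3 is $T$-independent and equal to $(\log X)^{2\alpha-2}$, while the prefactor $Xh^{-1}/T_{j}=2^{-j}$ decays geometrically (and, since the shift range $|h'|<2X/(T_{j}q)$ is empty once $T_{j}\gtrsim X$, only $\asymp\log h$ blocks are nonzero); hence the off-diagonal maximal sum is $\ll_{f}\sum_{j\geq 0}2^{-j}(\log X)^{2\alpha-2}\ll_{f}(\log X)^{2\alpha-2}$.

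I expect the genuinely delicate point to be precisely this interchange of $\sum_{\chi}$ and $\max_{T}$, and the reason the diagonal must be extracted \emph{before} passing from $\max_{j}$ to $\sum_{j}$: the diagonal is $T$-independent, so carrying it through the dyadic sum would multiply it by the number $\asymp\log h$ of scales and degrade the admissible range to $h\gg\varphi(q)(\log X)^{3-2\alpha}$, whereas peeling it off keeps the range at the stated $\varphi(q)(\log X)^{2-2\alpha}$. The only other thing to handle with care is retaining the cancellation in the characters while taking the supremum, which is exactly why one works with the smooth majorants and the $T$-independence of the off-diagonal term of Lemma 2.3 rather than with pointwise absolute values inside the sum over $\chi$.
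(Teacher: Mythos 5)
Your skeleton (Lemma 2.1, then a pigeonhole over the $\varphi(q)$ characters applied to $\sum_{\chi}G(\chi)$, then Lemma 2.3 with $\sum_{n\sim X}|\lambda_f(n)|^2\ll_f X$) matches the paper's proof, and your treatment of the first integral is correct and is in substance the paper's (3.2). You have also correctly located the one genuinely delicate point: Lemma 2.3 controls $\max_T\sum_{\chi}$, while the maximal term needs $\sum_{\chi}\max_T$, i.e.\ a single character that is good for \emph{all} $T$ simultaneously. Be aware that the paper does not really solve this either: its inequality (3.1) asserts, by ``dropping all but one term,'' that one character satisfies the average bound for every $T>0$ at once, which a pigeonhole argument does not give (the minimizing character may depend on $T$); the rest of the paper's proof rests on that assertion.

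However, your proposed repair does not close the gap. The step $\max_j\leq\sum_j$ is legitimate only for nonnegative summands. The only nonnegative per-character object available is the full smoothed integral $\int|F_{\chi}(1+it)|^2\phi(t/T_j)\,dt$; if you run max-to-sum on that, the $T$-independent diagonal contribution $\asymp Xh^{-1}D$, with $D=\sum_{n\sim X}|\lambda_f(n)|^2n^{-2}\ll X^{-1}$, is counted once per dyadic scale, i.e.\ $\asymp\log h$ times, and you land exactly in the degraded range $h\gg\varphi(q)(\log X)^{3-2\alpha}$ that you yourself say must be avoided. Your alternative --- peel the diagonal first and apply max-to-sum only to the off-diagonal piece --- fails because the per-character off-diagonal is \emph{not} nonnegative: it is the signed real quantity $\sum_{m\neq n}|\lambda_f(m)\lambda_f(n)|(mn)^{-1}\chi(m)\overline{\chi(n)}\,W_j(m,n)$. (In Lemma 2.3 positivity is used only for the full integral, and the restriction $m\equiv n\pmod q$ appears only \emph{after} summing over all $\chi$ with unit coefficients.) To justify $\max_j\leq\sum_j$ you would have to pass to $|\mathrm{OD}_j(\chi)|$ character by character, and then the sum over $\chi$ can no longer be evaluated by orthogonality: with arbitrary signs $\epsilon_{\chi}$ one only has $|\sum_{\chi}\epsilon_{\chi}\chi(m)\overline{\chi(n)}|\leq\varphi(q)$ with no congruence restriction, the shift range becomes all $0<|h'|\leq 2X/T$ rather than multiples of $q$, and Shiu's bound then yields $\sum_{\chi}G(\chi)\ll\varphi(q)(\log X)^{2\alpha-2}$, i.e.\ a character with $G(\chi)\ll(\log X)^{2\alpha-2}$ --- the factor $\varphi(q)^{-1}$ in the Proposition is lost. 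So the positivity needed for the interchange and the orthogonality needed for the $\varphi(q)^{-1}$ saving pull in opposite directions, and your argument as written secures neither; the difficulty you correctly identified remains open in your proposal (as it does, implicitly, in the paper's own proof).
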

\begin{proof}
Dropping all but one term, there exists a character $\chi$ modulo $q$ such that for all $T>0,$  
\begin{equation} 
\int_{-T}^{T} \Big|\sum_{n\sim X} \frac{|\lambda_{f}(n)|\chi(n)}{n^{1+it}}\Big|^{2}dt \leq \frac{1}{\varphi(q)}\sum_{\chi(\text{mod} \thinspace q)}\int_{-T}^{T} \Big|\sum_{n\sim X} \frac{|\lambda_{f}(n)|\chi(n)}{n^{1+it}}\Big|^{2}dt.
\end{equation}

By Lemma 2.3, (2.8),
\begin{equation}\begin{split}\frac{1}{\varphi(q)}\sum_{\chi(\text{mod} \thinspace q)}\int_{0}^{Xh^{-1}} \big|\sum_{n\sim X} \frac{|\lambda_{f}(n)|\chi(n)}{n^{1+it}}\big|^{2} dt &\ll_{f} \frac{1}{Xh}\sum_{n \sim X} |\lambda_{f}(n)|^{2} + \varphi(q)^{-1}(\log X)^{2\alpha-2}\\&\ll  \frac{1}{h}+ \varphi(q)^{-1}(\log X)^{2\alpha-2} \\&\ll \varphi(q)^{-1} (\log X)^{2\alpha-2}. \end{split} \end{equation}
By the similar argument of (3.2),
\begin{equation}
\begin{split}
\frac{1}{\varphi(q)}\sum_{\chi(\text{mod} \thinspace q)}\max_{T>Xh^{-1}} &\frac{Xh^{-1}}{T} \int_{T}^{2T} \big|\sum_{n\sim X} \frac{|\lambda_{f}(n)|\chi(n)}{n^{1+it}}\big|^{2} dt 
\\&\ll_{f} \max_{T>Xh^{-1}} Xh^{-1}T^{-1}(\frac{T}{X} + \varphi(q)^{-1}(\log X)^{2\alpha-2}) \\ &\ll  \frac{1}{h}+ \varphi(q)^{-1}(\log X)^{2\alpha-2} \\&\ll \varphi(q)^{-1} (\log X)^{2\alpha-2}.
\end{split} \nonumber \end{equation}
By Lemma 2.1, 
\begin{equation} \begin{split}\frac{1}{X} \int_{X}^{2X} |\frac{1}{h}\sum_{n=x}^{x+h} \big|\lambda_{f}(n)|\chi(n) \big|^{2}dx
\ll& \int_{0}^{Xh^{-1}} \big|\sum_{n\sim X} \frac{|\lambda_{f}(n)|\chi(n)}{n^{1+it}}\big|^{2}dt \\&  + \max_{T>Xh^{-1}} \frac{Xh^{-1}}{T} \int_{T}^{2T} \big|\sum_{n \sim X} \frac{|\lambda_{f}(n)|\chi(n)}{n^{1+it}}\big|^{2}dt. 
\end{split}\end{equation}
Therefore, $$\frac{1}{X} \int_{X}^{2X} \big|\frac{1}{h}\sum_{n=x}^{x+h} |\lambda_{f}(n)|\chi(n)\big|^{2} dx \ll_{f} \varphi(q)^{-1} (\log X)^{2\alpha-2}.$$
\end{proof}
By the similar arguments of the proof of Proposition 3.1, we generalize Proposition 3.1 to arbitrary $2^{k}$ power of $|\lambda_{f}(n)|.$
\begin{Prop}
 Let $X>0$ be big enough. 
Let  k be a fixed non-negative integer. Assume that there exist positive constants $\beta$ and $\gamma$ such that both inequalities 
\begin{equation} \sum_{n=X}^{2X} |\lambda_{f}(n)|^{2^{k+1}} \ll_{f} X(\log X)^{\beta}, \end{equation}
\begin{equation} \sum_{p=1}^{X} \frac{|\lambda_{f}(p)|^{2^{k}}}{p}-\sum_{p=1}^{X} \frac{\gamma}{p} = O_{f}(1) \end{equation} hold.
Then for any real valued function $h$ such that 
 $(\log X)^{\beta-2\gamma+2} \ll_{f} h=o(X),$ 

\begin{equation} \frac{1}{X} \int_{X}^{2X} \big|\frac{1}{h}\sum_{n=x}^{x+h} |\lambda_{f}(n)|^{2^{k}}\big|^{2} dx \ll_{f} (\log X)^{2\gamma-2}.\end{equation}

\end{Prop}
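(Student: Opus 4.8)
The plan is to follow the proof of Proposition 3.1 almost verbatim, specializing throughout to $q=1$ (so that the only character modulo $q$ is trivial and the averaging over $\chi$ disappears) and replacing Lemma 2.3 by Lemma 2.4. First I would apply Lemma 2.1 with $q=1$ and with the nonnegative multiplicative function $|\lambda_{f}(n)|^{2^{k}}$ playing the role of $|\lambda_{f}(n)|\chi(n)$. This bounds the left-hand side of the desired inequality by
$$\int_{0}^{Xh^{-1}} \Big|\sum_{n\sim X} \frac{|\lambda_{f}(n)|^{2^{k}}}{n^{1+it}}\Big|^{2}dt + \max_{T>Xh^{-1}} \frac{Xh^{-1}}{T} \int_{T}^{2T} \Big|\sum_{n\sim X} \frac{|\lambda_{f}(n)|^{2^{k}}}{n^{1+it}}\Big|^{2}dt.$$

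Next I would estimate both terms using the bound of Lemma 2.4, namely $I_{k}\ll_{f} (T/X)(\log X)^{\beta}+(\log X)^{2\gamma-2}$. For the first term I take $T=Xh^{-1}$, which gives a contribution $\ll_{f} h^{-1}(\log X)^{\beta}+(\log X)^{2\gamma-2}$. For the second term I apply Lemma 2.4 on $[-2T,2T]$ (using $\int_{T}^{2T}\le\int_{-2T}^{2T}$) and then multiply by $Xh^{-1}/T$: the diagonal part contributes $\ll h^{-1}(\log X)^{\beta}$, while the remaining part is $(Xh^{-1}/T)(\log X)^{2\gamma-2}$, and the constraint $T>Xh^{-1}$ forces $Xh^{-1}/T<1$, so this remaining part is $\ll(\log X)^{2\gamma-2}$ uniformly in $T$. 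Hence both terms are $\ll_{f} h^{-1}(\log X)^{\beta}+(\log X)^{2\gamma-2}$.

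The only place where the hypothesis on $h$ is genuinely used — and the step I regard as the crux of the argument — is the absorption of the term $h^{-1}(\log X)^{\beta}$ into the target size $(\log X)^{2\gamma-2}$. The inequality $h^{-1}(\log X)^{\beta}\ll(\log X)^{2\gamma-2}$ is equivalent to $h\gg(\log X)^{\beta-2\gamma+2}$, which is exactly the lower bound imposed on $h$ in the statement; thus the threshold appearing in the hypothesis is precisely engineered so that the diagonal (mean-value) contribution, which is controlled by the second-moment hypothesis on $|\lambda_{f}(n)|^{2^{k+1}}$, is negligible against the off-diagonal main term $(\log X)^{2\gamma-2}$ produced by the shifted-convolution estimate of Lemma 2.4. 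Collecting the two estimates then yields the claimed bound $(\log X)^{2\gamma-2}$ for $\frac{1}{X}\int_{X}^{2X}\big|\frac{1}{h}\sum_{n=x}^{x+h}|\lambda_{f}(n)|^{2^{k}}\big|^{2}dx$, completing the proof. I do not expect any serious technical difficulty beyond this bookkeeping, since all the analytic content has already been isolated into Lemma 2.1 and Lemma 2.4.
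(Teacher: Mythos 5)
Your proposal is correct and follows essentially the same route as the paper: the $q=1$, $2^{k}$-power analogue of Lemma 2.1, followed by Lemma 2.4 applied separately to the range $[0,Xh^{-1}]$ and to the maximum over $T>Xh^{-1}$ (where $Xh^{-1}/T<1$ kills the off-diagonal factor), and finally the absorption of $h^{-1}(\log X)^{\beta}$ into $(\log X)^{2\gamma-2}$ via the hypothesis $h\gg(\log X)^{\beta-2\gamma+2}$. Your identification of that absorption step as the point where the lower bound on $h$ is engineered matches the paper's proof exactly.
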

\begin{proof}
By the similar argument of the proof of Lemma 2.1 (one just need to replace $|\lambda_{f}(n)|$ with $|\lambda_{f}(n)|^{2^{k}}$, $q=1$), 
\begin{equation}
\frac{1}{X} \int_{X}^{2X} \big|\frac{1}{h}\sum_{n=x}^{x+h} |\lambda_{f}(n)|^{2^{k}} \big|^{2}dx \end{equation}
$$\ll \int_{0}^{Xh^{-1}} \big|\sum_{n \sim X} \frac{|\lambda_{f}(n)|^{2^{k}}}{n^{1+it}}\big|^{2}dt + \max_{T>Xh^{-1}} \frac{Xh^{-1}}{T} \int_{T}^{2T} \big|\sum_{n\sim X} \frac{|\lambda_{f}(n)|^{2^{k}}}{n^{1+it}}\big|^{2}dt. $$
By Lemma 2.4,

\begin{equation}\begin{split}\int_{0}^{Xh^{-1}} \big|\sum_{n\sim X} \frac{|\lambda_{f}(n)|^{2^{k}}}{n^{1+it}}\big|^{2} dt &\ll_{f}  h^{-1}(\log X)^{\beta}+ (\log X)^{2 \gamma -2}, \end{split} \end{equation}
\begin{equation}
\begin{split}
\max_{T>Xh^{-1}} \frac{Xh^{-1}}{T} \int_{T}^{2T} \big|\sum_{n\sim X} \frac{|\lambda_{f}(n)|^{2^{k}}}{n^{1+it}}\big|^{2} dt 
&\ll_{f} \max_{T>Xh^{-1}} Xh^{-1}T^{-1}(\frac{T}{X}(\log X)^{\beta} +(\log X)^{2\gamma-2}) \\ &\ll  h^{-1} (\log X)^{\beta}+ (\log X)^{2\gamma-2}.
\end{split}  \end{equation}
Since $h^{-1} \ll_{f} (\log X)^{-\beta+2\gamma-2},$ (3.8),(3.9) are bounded by
$$(\log X)^{2\gamma-2}.$$

\end{proof}

\section{Proof of Theorem 1.1, Theorem 1.3, Corollary 1.4}
\subsection{Proof of Theorem 1.1}
By Proposition 3.1, there exists a $\chi$ modulo $q$ such that
\begin{equation}
\int_{X}^{2X} \big|\frac{1}{h}\sum_{n=x}^{x+h} |\lambda_{f}(n)|\chi(n)\big|^{2}dx \ll_{f} \varphi(q)^{-1}(\log X)^{2\alpha-2}. \nonumber
\end{equation}
Let $B(X)=\psi(X)^{2}\varphi(q)^{-1} (\log X)^{2\alpha-2}.$
By the Chebyshev inequality,
\begin{equation} \begin{split}\big|\{ x\in [X,2X-h] :  |\frac{1}{h} &\sum_{n=x}^{x+h} |\lambda_{f}(n)|\chi(n)| \gg_{f}  B(X)^{\frac{1}{2}}\}\big| \\&\ll_{f} B(X)^{-1}\int_{X}^{2X} \big|\frac{1}{h}\sum_{n=x}^{x+h} |\lambda_{f}(n)|\chi(n)\big|^{2}dx \\&=O_{f}(X\psi(X)^{-2}). \end{split} \nonumber\end{equation}

\subsection{Proof of Theorem 1.3, Corollary 1.4}
By Proposition 3.2, 
\begin{equation} \frac{1}{X} \int_{X}^{2X} \big|\frac{1}{h}\sum_{n=x}^{x+h} |\lambda_{f}(n)|^{2^{k}}\big|^{2} dx \ll_{f} (\log X)^{2\gamma-2}. \nonumber \end{equation}
Let $$B_{k}(X)= \psi(X)^{2}(\log X)^{2\gamma-2}.$$
By the Chebyshev inequality,
\begin{equation}\begin{split}\big|\{ x\in [X,2X-h] :  \frac{1}{h} \sum_{n=x}^{x+h} |\lambda_{f}(n)|^{2^{k}} \gg_{f} B_{k}(X)^{\frac{1}{2}}\}\big| &\ll_{f}  B_{k}(X)^{-1}\int_{X}^{2X} \big|\frac{1}{h}\sum_{n=x}^{x+h} |\lambda_{f}(n)|^{2^{k}}\big|^{2}dx \\&=O_{f}(X\psi(X)^{-2}).\nonumber \end{split} \end{equation}
When $k=1,$ by (1.8), $\beta=1.$ And by Lemma 2.5, $\gamma=1.$ Therefore, 
\begin{equation} \begin{split}\big|\{ x\in [X,2X-h] :  \frac{1}{h} \sum_{n=x}^{x+h} |\lambda_{f}(n)|^{2}| \gg_{f}  \psi(X)\big| &\ll_{f} 
(\psi(X))^{-2}\int_{X}^{2X} \big|\frac{1}{h}\sum_{n=x}^{x+h} |\lambda_{f}(n)^{2}\big|^{2}dx \\&= O_{f}(X\psi(X)^{-2}).\nonumber \end{split}\end{equation}

\section{Acknowledgements}
The author would like to thank his advisor Xiaoqing Li, for helpful advice.   The author also thanks the referee for careful reading and pointing out many mistakes.

\bibliographystyle{plain}   
\bibliography{ss}  

\begin{thebibliography}{1}

\bibitem{EMSS}
P.~D. T.~A. Elliott, C.~J. Moreno, and F.~Shahidi.
\newblock On the absolute value of {R}amanujan's {$\tau $}-function.
\newblock {\em Math. Ann.}, 266(4):507--511, 1984.

\bibitem{IK1}
Henryk Iwaniec and Emmanuel Kowalski.
\newblock {\em Analytic number theory}, volume~53 of {\em American Mathematical
  Society Colloquium Publications}.
\newblock American Mathematical Society, Providence, RI, 2004.

\bibitem{L1}
Guangshi L\"{u}.
\newblock Average behavior of {F}ourier coefficients of cusp forms.
\newblock {\em Proc. Amer. Math. Soc.}, 137(6):1961--1969, 2009.

\bibitem{MR3}
Kaisa Matom\"{a}ki and Maksym Radziwi\l~\l.
\newblock Multiplicative functions in short intervals.
\newblock {\em Ann. of Math. (2)}, 183(3):1015--1056, 2016.

\bibitem{MRT2}
Kaisa Matom\"{a}ki, Maksym Radziwi\l~\l, and Terence Tao.
\newblock Correlations of the von {M}angoldt and higher divisor functions {II}:
  divisor correlations in short ranges.
\newblock {\em Math. Ann.}, 374(1-2):793--840, 2019.

\end{thebibliography}
\end{document}